\documentclass[10pt,twoside,final]{amsart}

\usepackage[english]{babel}
\usepackage{graphicx,epstopdf,epsfig}
\usepackage{amsfonts,epsfig,fancyhdr,graphics,amsmath,amssymb}

\title{On the traces of the product of two linear similarity classes}

\newtheorem{theorem}{Theorem}[section]

\newtheorem{lemma}[theorem]{Lemma}

\newtheorem{remark}[theorem]{Remark}

\newcommand {\PSL }{\mathrm{PSL}}
\newcommand {\mr }{\mathrm{mr}}
\newcommand {\tr }{\operatorname{tr}}

\newcommand {\GL }{\mathrm{GL}}
\newcommand {\SL }{\mathrm{SL}}
\newcommand {\M }{\mathrm{M}}
\newcommand {\Idm }{\mathrm{I}}
\newcommand {\rank }{\operatorname{rank}}
\newcommand {\GF }{\mathrm{GF}}

\usepackage{ifdraft}
\ifdraft{\usepackage[outer]{showlabels}
	\usepackage{todonotes}}{}
\ifdraft{\usepackage[outer]{showlabels}
	\usepackage{todonotes} \usepackage{listlbls} \usepackage{datetime}}{}

\usepackage[pagebackref,colorlinks,citecolor=red,urlcolor=blue,linkcolor=green, bookmarks=false,hypertexnames=true]{hyperref}
\usepackage{orcidlink}
\usepackage{fancyhdr}
\usepackage{verbatim}

\begin{document}
\bibliographystyle{plain}

\setcounter{page}{1}

\thispagestyle{empty}

\keywords{ Special linear group, conjugacy classes, Arad-Herzog conjecture, traces}

\subjclass{15A23}

\author{Klaus Nielsen}\,\orcidlink{0009-0002-7676-2944}
\email{klaus@nielsen-kiel.de}

\ifdraft{\today \ \currenttime}{\date{November 15, 2024}}
\pagestyle{fancy}
\fancyhf{}
\fancyhead[OC]{Klaus Nielsen}
\fancyhead[EC]{Traces of products of similarity classes }
\fancyhead[OR]{\thepage}
\fancyhead[EL]{\thepage}

\maketitle

\begin{abstract}
It is shown that the product of two nonscalar conjugacy  classes of the special linear group $\SL(n,K)$ contains matrices of arbitrary trace if $n \ge 4$ and $K$ is an abitrary field  or $n=3$ and $K$ is finite. 
\end{abstract}

\section{Introduction}

Let $\Omega, \Psi$ be nonscalar similarity classes of $\GL(n, K)$. In \cite{Adan-Bante_Harris}, Adan-Bante and Harris  have shown that the set $\tr \Omega \Psi = \{\tr \omega \psi; \omega \in \Omega, \psi \in \Psi\}$ of traces of
$\Omega \Psi$ satisfies $|\tr \Omega \Psi| \ge q -1$ if $K = \GF(q)$ is a finite field. They also proved that $|\tr\Omega \Psi| \ge \frac{q}{2}$ if $\Omega$ and $\Psi$ are conjugacy classes of $\SL(n,q)$.

It follows that  the product of 2 nontrivial conjugacy classes
of $\PSL(n, q)$
contains at least $q$ conjugacy classes. 
This verifies a conjecture of Arad and Herzog  in a special case.
Arad and Herzog \cite{Arad_Herzog}  conjectured that in a finite simple group $G$ the product of 2 nontrivial conjugacy classes of $G$ is never a conjugacy class.

Let $\M(n, K)$ denote the set of $n \times n$ matrices over the field $K$.
In this note, we want to improve the results of Adan-Bante and Harris and show

\begin{theorem}               \label{theorem-1}
	Let $n \ge 3$.
	Let $\Omega, \Psi$ be similarity classes of $\M(n,K)$. Assume that $\Omega$ and  $\Psi$ are nonscalar.
	Then $\tr(\Omega \Psi)  = K$. 
\end{theorem}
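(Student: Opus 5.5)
Fix $A\in\Omega$ and $B\in\Psi$, and write $\tr(\Omega\Psi)=\{\tr(A\,g^{-1}Bg) : g\in\GL(n,K)\}$. The plan is to conjugate both matrices into convenient forms by choice of basis, then exhibit a one-parameter family $g_t$ with $\tr(A g_t^{-1} B g_t)$ an affine function of $t$ with nonzero slope. That already gives all of $K$ without any surjectivity argument for polynomials.

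\textbf{Normal forms.} A nonscalar matrix $A$ has a vector $v$ with $v, Av$ linearly independent. Taking a basis beginning $v, Av,\dots$ and using the rational canonical form, we may assume $Ae_1=e_2$. Thus $A$ has a companion block whose first column is $e_2$, and similarly for $B$. More usefully, we may also use the following form: for nonscalar $A$ and any $n\ge 2$ there is a basis in which $A$ has $(2,1)$ entry $1$ and $(1,2)$ entry $0$ with $a_{11}$ arbitrary. This is the key freedom. The conjugates of $B$ include $B+ s E_{ij}$-type perturbations. Concretely, if $B=\begin{pmatrix}\beta&*\\ *&*\end{pmatrix}$, then conjugating by the elementary matrix $g=I+tE_{ij}$ gives $g^{-1}Bg=B+t(BE_{ij}-E_{ij}B)-t^2E_{ij}BE_{ij}$. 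For $i\ne j$ the last term is $t^2 b_{ji}E_{ij}$.

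\textbf{The trace computation.} With $g=I+tE_{ij}$ ($i\neq j$),
\[
\tr(Ag^{-1}Bg)=\tr(AB)+t\,\tr(A(BE_{ij}-E_{ij}B))-t^2 b_{ji}a_{ji}.
\]
The plan is therefore to choose the bases of $A$ and $B$ (independently, since we may replace $B$ by any conjugate) and indices $i\ne j$ such that $b_{ji}a_{ji}=0$ while the linear coefficient $(BA)_{ji}-(AB)_{ji}=\tr([A,B]E_{ij})$ is nonzero. Then the trace is a nonconstant affine function of $t$ and covers $K$. To arrange this, put $A$ in a form with $e_1\mapsto e_2$ and choose $B$ with $B e_3$ or $B^Te_1$ interacting suitably. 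Using $n\ge3$, we can pick the pair $(j,i)$ so that $a_{ji}=0$. For instance, take $A$ with first column $e_2$ and use $j=3,i=1$, so that $a_{31}=0$. We then need $[A,B]_{31}\ne0$, i.e.\ $(BA)_{31}-(AB)_{31}=b_{32}-\sum_k a_{3k}b_{k1}\neq0$, and we will adjust $B$ by a preliminary conjugation to make this hold.

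\textbf{Main obstacle.} The hard step is showing that, for arbitrary nonscalar $A,B$ and arbitrary fields (including $\GF(2)$, with no room to avoid finitely many bad values), some conjugate of $B$ makes $[A,B]_{31}\neq0$ given the chosen form of $A$. The first thing to try is a contradiction argument. If $\tr([A,B']E_{13})=0$ for every conjugate $B'$ of $B$ with $a_{31}=0$, then, varying $B'$ over conjugates by the stabilizer-free subgroup of $g$ fixing the relevant coordinates, the linear functional $X\mapsto\tr(E_{13}[A,X])=\tr([E_{13},A]X)$ vanishes on the span of these conjugates. For nonscalar $B$, this span contains all of $\mathfrak{sl}_n$ together with $B$ (the conjugacy class spans an affine space containing the trace-zero matrices). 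That would force $[E_{13},A]$ to be scalar, hence zero, which is impossible for nonscalar $A$ in the chosen form.

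The delicate point is that we need the restricted family of conjugates, namely those compatible with $b_{13}a_{13}$-type vanishing, to still span enough. If this fails, fall back on a case split. If $B$ can be taken with a zero in the required position (true when $n\ge3$ by choosing a cyclic-subspace basis for $B$ as well), handle it directly. Otherwise treat the case where $A$ or $B$ has a large eigenspace (e.g.\ $A=\lambda I+N$ with $N$ of rank one) separately by explicit matrices.
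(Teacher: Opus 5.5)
Your reduction is sound, and simpler than you think. Since the first column of $A$ is $e_2$, you have $a_{31}=0$. So for $g_t=\Idm+tE_{13}$ and \emph{every} conjugate $B'$ of $B$ the $t^2$-term vanishes, and
\[
\tr(Ag_t^{-1}B'g_t)=\tr(AB')+t\,[A,B']_{31}.
\]
(Your stated slope and the quadratic term of $g^{-1}Bg$ carry sign slips; these are harmless.) The ``restricted family'' you worry about is therefore a red herring: no condition on $B'$ is needed.

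\textbf{The gap.} You never produce a conjugate $B'$ with $[A,B']_{31}\neq 0$, and this step carries the whole proof. Your contradiction argument rests on the claim that the linear span of a nonscalar similarity class contains $\mathfrak{sl}_n(K)$. You assert this without proof; the parenthetical only restates it. It is not automatic. For $n=2$, $K=\GF(2)$, the class of $X=\left(\begin{smallmatrix}0&1\\1&1\end{smallmatrix}\right)$ is $\{X,X+\Idm_2\}$, whose span $\{0,\Idm_2,X,X+\Idm_2\}$ misses $E_{12}$. So for $n\ge 3$ you would need an argument that really uses $n\ge3$ over small fields. The fallback case split is not carried out either, so as it stands the proposal is not a proof.

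\textbf{How to close it.} Pick $w$ with $w,Bw$ independent. Let $B'$ be the matrix of $B$ in a basis $f_1=w$, $f_2$, $f_3=Bw$, $f_4,\dots,f_n$. Then the first column of $B'$ is $e_3$, and since the first column of $A$ is $e_2$,
\[
[A,B']_{31}=\sum_k a_{3k}b'_{k1}-\sum_k b'_{3k}a_{k1}=a_{33}-b'_{32}.
\]
Replacing $f_2$ by $f_2+cf_1$ keeps the first column equal to $e_3$ and changes $b'_{32}$ to $b'_{32}+c$. So you can make $a_{33}-b'_{32}\neq 0$ over any field, including $\GF(2)$. Then $t\mapsto \tr(AB')+t(a_{33}-b'_{32})$ is onto $K$.

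\textbf{Comparison with the paper.} With this filled in, your route is a genuinely different and more elementary proof. The paper puts $\Psi$ in block form $R\oplus S$ with $R$ cyclic of size $r=\partial\mu(\Psi)\le\partial\mu(\Omega)$ and reduces to the traces of $AR^X$ for $r\times r$ cyclic blocks. It then needs Lemma~\ref{ARADHERZOG2} (companion forms and an $LU$ factorization) for $r\ge 3$, and the $2\times 2$ case analysis of Lemma~\ref{ARADHERZOG1} for $r=2$. Your transvection family avoids all of that and uses $n\ge 3$ only to have a third coordinate.
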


\begin{theorem}               \label{theorem-2}
	Let $n \ge 3$. If $K$ is infinite let $n \ge 4$.
	Let $\Omega, \Psi$ be  conjugacy classes of $\SL(n,K)$. Assume that $\Omega$ and  $\Psi$ are nonscalar.
	Then $\tr \Omega \Psi  = K$. 
\end{theorem}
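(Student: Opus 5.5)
The plan is to deduce Theorem~\ref{theorem-2} from Theorem~\ref{theorem-1}, which we take as already established. The only difference between the two statements is how conjugacy is defined. Let $A\in\Omega$ and $B\in\Psi$. The $\SL(n,K)$-class of $A$ is contained in the $\GL(n,K)$-similarity class $A^{\GL}$, and $A^{\GL}$ is a union of $\SL$-classes $A^{D\SL}$, where $D$ runs over $\GL(n,K)/\SL(n,K)\cdot C_{\GL}(A)$. Since trace is invariant under simultaneous conjugation, $\tr(\Omega\Psi)$ depends only on the pair of classes modulo simultaneous conjugation. Writing $\Omega = A^{\SL}$ and $\Psi=B^{\SL}$, we get
\[
\tr(\Omega\Psi)=\{\tr(A\, (B^{D})^{S}) : S\in \SL(n,K)\}
\quad\text{for suitable } D,
\]
and what must be controlled is the relative determinant twist between the two factors. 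Concretely, $\tr(A^{\GL}B^{\GL})=\bigcup_{d\in K^*/\det C(A)\det C(B)} \tr(A^{\SL} (B^{D_d})^{\SL})$.

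The first step is to reduce to the case where the relevant centralizers have large determinant images. If $\det C_{\GL}(A)=K^*$, then $A^{\SL}=A^{\GL}$, and Theorem~\ref{theorem-2} follows at once from Theorem~\ref{theorem-1}. This happens, for instance, whenever $A$ has a Jordan block structure with a $1\times1$ block or two blocks of coprime size. The restrictive case is when every elementary divisor of $A$ has a common structure; for example $A$ is cyclic with $\det C(A)=N_{L/K}(L^*)$ for the algebra $L=K[A]$, or $A$ is a single repeated block type.

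The second step is, rather than relying on the reduction alone, to redo the construction behind Theorem~\ref{theorem-1} inside $\SL$. I expect the proof of Theorem~\ref{theorem-1} to produce, for each $t\in K$, explicit $\omega\in\Omega$ and $\psi\in\Psi$ in a rational-canonical or companion-type normal form, built from a nonzero vector $v$ with $v, Av$ independent, with $\tr(\omega\psi)=t$ obtained by a free parameter entering linearly. I would show that the conjugating matrix $g$ realizing $\psi=gBg^{-1}$ can be modified by right multiplication by an element of $C(B)$, and by left multiplication by an element of $C(A)$ fixing the trace, so that $\det g=1$. When $n\ge4$ there is enough room: the construction should involve only a $3$-dimensional (or smaller) subspace, so one can adjust $g$ by a diagonal matrix $\mathrm{diag}(1,\dots,1,\lambda)$ acting on a complementary coordinate on which the construction is insensitive. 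This is the key place where $n\ge4$ is used, and I would place the free coordinate inside a block where the relevant centralizer contains $\lambda\cdot$(identity on that block up to the companion structure), giving every determinant.

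The main obstacle is the case $n=3$, where no spare coordinate exists. For finite $K$ I would instead count. The trace map $\Omega\times\Psi\to K$ has fibres whose sizes can be estimated, and the $\GL$-class splits into at most $\gcd(n,q-1)\le 3$ $\SL$-classes. Combining the fact that $\tr(A^{\GL}B^{\GL})=K$ with a character-sum or fibre-counting argument should show that each $\SL$-twist $A^{\SL}(B^{D})^{\SL}$ already attains every trace. Equivalently, I would use a Frobenius-type formula $\sum_\chi \chi(A)\chi(B)\overline{\chi(C)}/\chi(1)$ restricted to the finitely many class types in $\SL(3,q)$ and show that it is nonzero for some $C$ of each trace. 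This explicit small-rank verification is where most of the work will lie.
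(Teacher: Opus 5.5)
Your first step is correct and matches the paper: if $\Omega$ or $\Psi$ is a $\GL$-similarity class, then $\tr(\Omega\Psi)=\tr(\overline{\Omega}\,\overline{\Psi})=K$ by Theorem~\ref{theorem-1}. Your bookkeeping of the twist by $K^*/\det C(A)\det C(B)$ is also right. Beyond that, however, the proposal is a program rather than a proof, and the mechanism you suggest for $n\ge 4$ fails in exactly the case that remains.

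\begin{itemize}
\item Multiplying the conjugator $g$ by elements of $C(B)$ and $C(A)$ moves $\det g$ only within its coset modulo $\det C(A)\det C(B)$. That subgroup \emph{is} the obstruction, so this cannot reach the other twists.
\item The extra factor $\mathrm{diag}(1,\dots,1,\lambda)$ either centralizes $B$, in which case its determinant already lies in the proper subgroup $\det C(B)$, or it changes $\psi$. In the second case you give no reason why $\tr(\omega\psi)$ stays the same.
\item For a cyclic class, e.g.\ a regular unipotent with $C(B)=K[B]^*$ and $\det C(B)=(K^*)^n$, there is no block on which scalars commute with $B$ at all.
\item The construction behind Theorem~\ref{theorem-1} is not confined to a $3$-dimensional subspace. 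It uses an $r\times r$ block with $r=\partial\mu(\Psi)$, and Lemma~\ref{ARADHERZOG2} fills an $(n-1)\times(n-1)$ block.
\item Small fields are genuinely delicate. The paper needs $m\ge 3$ when $|K|\le 3$ in Lemma~\ref{ARADHERZOG4}, and it checks $\SL(4,3)$ separately in Lemma~\ref{ARADHERZOG7}.
\end{itemize}

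What is missing are devices that make the trace independent of the twist.

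\begin{itemize}
\item \textbf{Noncyclic $\Psi$.} The paper writes $Q=R\oplus S$ with $R$ cyclic of size $2\le r\le n/2$. Since $\Omega$ is not a similarity class, $\mr(W)\ge n/2$; otherwise $W$ would have a $1\times1$ Jordan block. So Sourour's theorem (Lemma~\ref{ARADHERZOG8}) lets the $r\times r$ corner $A$ of $W$ have a simple eigenvalue in $K$. Then $\det C_{\GL(r,K)}(A)=K^*$. Hence conjugating $W$ by $X\oplus\Idm_{n-r}$ with $X\in\SL(r,K)$ already gives every trace $\tr(A^XR)+\tr(DS)$, which covers $K$ by Theorem~\ref{theorem-1} and Lemma~\ref{ARADHERZOG1}.
\item \textbf{Cyclic classes.} The paper uses the normal forms of Lemma~\ref{ARADHERZOG3a}. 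It factors $CF=Z_1Z_2$ via Lemma~\ref{ARADHERZOG2} with $\det Z_1$ equal to the needed twist, $\tr Z_1=0$ and $\tr Z_2=\tau$. It then splits $Z_1=XY$ with prescribed $\det X$ and $\det Y$. The product becomes block triangular with trace $\tr Z_1+\tr Z_2=\tau$, whatever the twist.
\item \textbf{The case $n=3$.} Your character-sum plan is not carried out. Knowing that the union over all twists is $K$ says nothing about each individual piece. No character theory is needed here. For finite $K$, the only non-similarity classes of $\SL(3,K)$ are $\lambda$ times a regular unipotent (the paper gets this from Newman's theorem). Such a class has a representative $\left(\begin{array}{cc} A & b\\ 0 & \lambda\end{array}\right)$ with $\mu(A)=(x-\lambda)^2$. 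Conjugating by $X^{-1}\oplus\det X\in\SL(3,K)$ gives trace $\tr(A^X\widetilde{A})+\lambda\mu$ with $X\in\GL(2,K)$ arbitrary. Lemma~\ref{ARADHERZOG1} then finishes the argument.
\end{itemize}
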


\section{The traces of the product of 2 similarity classes}

\begin{lemma}               \label{ARADHERZOG1}
	Let $\Omega, \Psi$ be nonscalar similarity classes of $\M(2,K)$. Assume that $\Omega$ is not irreducible.
	\begin{enumerate}
		\item  $\tr \Omega \Psi = K - \{\alpha \tr \Psi\}$ if and only if
		$(\Omega - \alpha \Idm_2)^2 = 0$ and $\Psi$ is irreducible.
		\item $\tr \Omega \Psi = K$ if and only if $\Omega$ is not primary or $\Psi$ is not irreducible.
	\end{enumerate}
\end{lemma}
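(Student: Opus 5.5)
Since $\Omega$ is not irreducible, its characteristic polynomial splits over $K$. We may therefore conjugate so that every $\omega\in\Omega$ we use is upper triangular, with eigenvalues $\alpha,\beta\in K$. Write $\Psi$ through a representative $\psi=\begin{pmatrix} a & b\\ c & d\end{pmatrix}$, where $a+d=\tr\Psi$ and $ad-bc=\det\Psi$ are fixed. The idea is to fix one normal form of $\Omega$ and let $\psi$ run over all of $\Psi$; then $\tr\omega\psi$ becomes an explicit function of the entries of $\psi$. We then determine which values in $K$ this function misses.

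\textbf{Case $\Omega$ not primary ($\alpha\neq\beta$).} Take $\omega=\mathrm{diag}(\alpha,\beta)$. Then
\[
\tr\omega\psi=\alpha a+\beta d=\beta\tr\Psi+(\alpha-\beta)a .
\]
So it suffices that the $(1,1)$-entry $a$ of members of $\Psi$ takes every value in $K$. Since $\Psi$ is nonscalar, it contains the companion matrix of its characteristic polynomial $x^2-tx+\delta$. Conjugating by unipotent upper triangular matrices, the $(1,1)$-entry of $\begin{pmatrix}1&s\\0&1\end{pmatrix}\begin{pmatrix}0&-\delta\\1&t\end{pmatrix}\begin{pmatrix}1&-s\\0&1\end{pmatrix}$ is $s$. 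Hence every $a\in K$ occurs, and $\tr\Omega\Psi=K$.

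\textbf{Case $\Omega$ primary.} As $\Omega$ is nonscalar, it is the class of $\omega=\alpha\Idm_2+N$ with $N=\begin{pmatrix}0&1\\0&0\end{pmatrix}$, so $(\Omega-\alpha\Idm_2)^2=0$. Then $\tr\omega\psi=\alpha\tr\Psi+c$, where $c$ is the $(2,1)$-entry of $\psi$. Thus $\tr\Omega\Psi=\alpha\tr\Psi+\{c:\psi\in\Psi\}$, and everything reduces to finding the set $C$ of $(2,1)$-entries occurring in $\Psi$. From the companion-matrix family above, every nonzero $c$ occurs; scaling via conjugation by $\mathrm{diag}(1,\lambda)$ changes $c$ to $\lambda c$, so we get all of $K^\ast$. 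The entry $c=0$ occurs exactly when $\Psi$ contains an upper triangular matrix, i.e.\ when $\Psi$ has an invariant line, i.e.\ when $\Psi$ is not irreducible. So $C=K$ if $\Psi$ is reducible and $C=K^\ast$ if $\Psi$ is irreducible. In the latter case $\tr\Omega\Psi=K-\{\alpha\tr\Psi\}$.

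\textbf{Assembling (1) and (2).} Statement (2) follows from the two cases: if $\Omega$ is non-primary, or primary with $\Psi$ reducible, we get $K$; if $\Omega$ is primary and $\Psi$ irreducible, one value is missing. For (1), the forward direction follows because $\tr\Omega\Psi\neq K$ forces $\Omega$ primary and $\Psi$ irreducible, and then primary nonscalar means $(\Omega-\alpha'\Idm_2)^2=0$ for the eigenvalue $\alpha'$ of $\Omega$. The missing value is then $\alpha'\tr\Psi$. If $\tr\Psi\neq0$ this forces $\alpha=\alpha'$. If $\tr\Psi=0$, the right side $K-\{0\}$ is the same for every choice of $\alpha$, so the equivalence should be read with $\alpha$ the eigenvalue of $\Omega$.

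\textbf{Main obstacle.} The only delicate point is the characterization $0\in C\iff\Psi$ reducible. I would argue it directly: a matrix with $c=0$ has $e_1$ as an eigenvector. Conversely, an eigenvector $v$ of $\psi$ can be moved to $e_1$ by some $g\in\GL(2,K)$, and then $g\psi g^{-1}$ is upper triangular.
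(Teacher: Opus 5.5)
Your proof is correct and follows essentially the same route as the paper. The paper also reduces to explicit $2\times 2$ products of a triangular representative of $\Omega$ with a companion (or general) representative of $\Psi$, and gets trace $\alpha\tr\Psi$ plus an off-diagonal entry that ranges over $K$ or $K^*$. The only difference is that it varies the entry of $\omega$ against a fixed companion matrix rather than conjugating $\psi$, which is equivalent since $\tr(\omega^g\psi)=\tr(\omega\psi^{g^{-1}})$.

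Your write-up also makes explicit two points the paper leaves implicit. First, $\alpha\tr\Psi$ is attained when $\Omega$ is primary and $\Psi$ is reducible (take $\psi$ triangular). Second, in (1) $\alpha$ must be read as the eigenvalue of $\Omega$, because when $\tr\Psi=0$ the forward direction fails for any other $\alpha$.
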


\begin{proof}
	Let  $\mu(\Omega) = (x - \lambda) (x - \alpha)$, $\mu(\Psi) = x^2 - \gamma x - \delta$. Then   
	\[
	\left (\begin{array} {cc} \alpha & 0\\ \mu & \lambda \end{array} \right )
	\left (\begin{array} {cc} 0  & 1 \\ \gamma & \delta \end{array} \right ) =
	\left (\begin{array} {cc}  0 & \alpha   \\  \lambda \gamma   &  \lambda \delta + \mu \end{array} \right ) \in \Omega \Psi
	\]
	if $\lambda \ne \alpha$ or  $\mu \ne 0$.
	If $\lambda = \alpha$ and $\Psi$ is irreducible, then  $\tr \Omega \Psi = K - \{\alpha \tr \Psi\}$:
	\[
	\left (\begin{array} {cc} \alpha & \mu\\ 0 & \alpha \end{array} \right )
	\left (\begin{array} {cc} \delta - \epsilon & \rho \\ \nu & \epsilon \end{array} \right ) =
	\left (\begin{array} {cc} \alpha (\delta -\epsilon) + \mu \nu & \alpha \rho + \mu \epsilon \\  \alpha \nu   &  \alpha \epsilon  \end{array} \right ).
	\]
\end{proof}

\begin{lemma}        \label{ARADHERZOG2}      
	Let $\Omega, \Psi$ be cyclic similarity classes of $\M(n,K)$. Let $D \in \GL(n-1,K)$. Put $\delta = (\det D)^{-1}\det \Omega \Psi$. Then there exists a row vector 
	$z$ such that 
	\[
	\left (\begin{array} {cc} \delta & z\\ 0 & D \end{array} \right ) \in \Omega \Psi.
		\]
\end{lemma}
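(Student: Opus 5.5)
The plan is to realize the product as $\omega\psi$ with $\psi$ a companion matrix and $\omega$ a matrix with many free entries, and then use those free entries to force $\omega\in\Omega$. Conjugating by $\mathrm{diag}(1,P)$ with $P\in\GL(n-1,K)$ preserves the shape $\left(\begin{smallmatrix}\delta & z\\ 0 & D\end{smallmatrix}\right)$ and only replaces $D$ by $P^{-1}DP$. So I may put $D$ into any convenient normal form, e.g.\ a Frobenius (rational canonical) form, whose subdiagonal is as large as possible.

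\textbf{Step 1: fix $\psi$.} Let $\mu(\Psi)=x^n-c_nx^{n-1}-\dots-c_1$. Since $\Psi$ is cyclic, I take $\psi\in\Psi$ to be the companion matrix with $\psi e_i=e_{i+1}$ for $i<n$ and $\psi e_n=c=\sum c_ie_i$. Then the columns of $\omega\psi$ are $\omega e_2,\dots,\omega e_n,\omega c$. The requirement $\omega\psi=\left(\begin{smallmatrix}\delta & z\\ 0 & D\end{smallmatrix}\right)$ therefore becomes three conditions:
\begin{itemize}
\item $\omega e_2=\delta e_1$;
\item for $3\le j\le n$, the entries of $\omega e_j$ in rows $2,\dots,n$ form the $(j-2)$-th column of $D$, and its first entry is free;
\item $\omega c$ has rows $2,\dots,n$ equal to the last column of $D$.
\end{itemize}
Since $\omega c=c_1\omega e_1+\sum_{i\ge 2}c_i\omega e_i$, the last condition is a linear condition on the free column $\omega e_1$ when $c_1\neq 0$. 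When $c_1=0$ (singular $\Psi$) it is a condition on $D$ that must be arranged via the normal form of $D$.

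\textbf{Step 2: use the free parameters.} These are the whole column $\omega e_1$ (subject to the linear condition above) and the first-row entries of $\omega e_3,\dots,\omega e_n$. I want to choose them so that $\omega$ has the characteristic polynomial $\mu(\Omega)$ and is cyclic. Cyclicity I would get from an explicit cyclic vector. For example, after conjugating $D$ suitably, $\omega$ can be made to have the Hessenberg shape, with $e_1$ mapped along a chain whose subdiagonal entries are nonzero, which forces $\omega$ to be nonderogatory. Then $\omega\in\Omega$ iff $\chi_\omega=\mu(\Omega)$. Expanding $\chi_\omega$ along the first row, which carries the free entries, gives coefficients that are affine in those entries. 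There are $n-1$ free first-row entries, and they should suffice to hit the $n-1$ non-determinant coefficients.

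\textbf{Step 3: the determinant.} The constant coefficient is automatic: $\det\omega\cdot\det\psi=\det(\omega\psi)=\delta\det D=\det\Omega\Psi$, so $\det\omega=\det\Omega$ is forced by the choice of $\delta$. This is exactly why $\delta$ is defined as in the statement.

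\textbf{Main obstacle.} I expect the obstacle to be the simultaneous control of cyclicity and of the linear system for the characteristic polynomial when $D$ is arbitrary, in particular when $D$ is derogatory. It is also delicate when $\delta=0$ or $c_1=0$, where the chain structure degenerates and the affine map from the free entries to the coefficients might fail to be surjective. My first attempt would be an induction on $n$. Put $D$ in block Frobenius form, treat the first companion block as above, and absorb the remaining blocks through the free first-row entries. If that fails, I would fall back to choosing the cyclic vector of $\omega$ differently (e.g.\ $e_n$ instead of $e_1$) in the degenerate cases.
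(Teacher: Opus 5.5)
\textbf{There is a genuine gap.} Your reduction is set up correctly. But the step that carries all the content is only asserted: that the free entries can be chosen so that $\omega$ is cyclic with characteristic polynomial $\mu(\Omega)$. In one case your setup cannot work at all.

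\textbf{Singular $\Psi$.} Suppose $\Psi$ is singular ($c_1=0$). Then $\ker\psi$ is spanned by $v=(-c_2,\dots,-c_n,1)^{T}$. The equation $\omega\psi=T:=\left(\begin{smallmatrix}\delta & z\\ 0 & D\end{smallmatrix}\right)$ forces $Tv=0$. The last $n-1$ coordinates of $Tv=0$ say $D(-c_3,\dots,-c_n,1)^{T}=0$, which is impossible for invertible $D$. So no normal form of $D$ rescues this case. In fact $\ker T\subseteq\langle e_1\rangle$, so any $\psi\in\Psi$ that can occur must satisfy $\psi e_1=0$, and the companion matrix does not. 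You cannot discard this case: the lemma concerns arbitrary cyclic classes of $\M(n,K)$, and it is applied to possibly singular blocks in the proof of Lemma~\ref{ARADHERZOG3}.

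\textbf{Nonsingular $\Psi$.} For $c_1\neq 0$, your family is $\omega=T\psi^{-1}=B+e_1w^{T}$. Here $B=(0\oplus D)\psi^{-1}$ is fixed, and $w^{T}$ is the first row of $\omega$, free except for $w_2=\delta$.
\begin{itemize}
\item By the standard pole-placement argument, this family reaches every class of the right determinant, with cyclic members, exactly when $e_1$ is a cyclic vector of $B$. Otherwise the Krylov space of $e_1$ is a proper invariant subspace common to all members, and all their characteristic polynomials share a fixed factor.
\item $B$ maps into $\langle e_2,\dots,e_n\rangle$ and acts there as $[\,0\mid d_1\mid\cdots\mid d_{n-2}\,]$, where $d_j$ are the columns of $D$. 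So the condition is that your forced vector $c_1^{-1}\bigl(d_{n-1}-\sum_{k\ge 3}c_kd_{k-2}\bigr)$ be a cyclic vector of that matrix.
\item This genuinely depends on $D$. For $n=3$ it reads $D_{22}\neq c_3D_{21}$. For example, $c_3=0$ and $D=\left(\begin{smallmatrix}1&1\\1&0\end{smallmatrix}\right)$ give $(x-1)\mid\chi_\omega$ for every choice of the free entries.
\item You give no argument that a suitable conjugate of $D$ always exists. This is hardest when $D$ has no eigenvalue in $K$ and cannot be triangularized.
\item Your proposed cyclicity certificate also fails. The chain from $e_1$ breaks immediately, since $\omega e_2=\delta e_1$ gives $\omega_{32}=0$, so $\omega$ is never Hessenberg with nonzero subdiagonal.
\end{itemize}

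\textbf{The missing idea.} Let both factors depend on $D$. Conjugate so that $D=LU$ with $L$ lower and $U$ upper triangular and invertible. Then take $\omega=\left(\begin{smallmatrix}y&\beta\\ L&0\end{smallmatrix}\right)$ and $\psi=\left(\begin{smallmatrix}0&U\\ \gamma&x\end{smallmatrix}\right)$. Each is a triangular block with nonzero diagonal bordered by one free row. Hence each is cyclic, with characteristic polynomial an affine bijection of that row. Singular classes simply give $\beta=0$ or $\gamma=0$, and then $\psi e_1=0$ as required. The product is $\left(\begin{smallmatrix}\beta\gamma& yU+\beta x\\ 0&LU\end{smallmatrix}\right)$, and $\beta\gamma=\delta$ follows by taking determinants.
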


\begin{proof}
	We may assume that  $D = L U$, where $L \in \GL(n-1,K)$ is lower, and $U \in \GL(n-1,K)$ is upper triangular. Then there exist row vectors  $x, y$
	such that 
	\[
	\left (\begin{array} {cc} y & \beta\\ L & 0 \end{array} \right ) \in \Omega,
	\left (\begin{array} {cc} 0 & U\\ \gamma & x \end{array} \right ) \in \Psi.
	\]
	Then 
	\[
\left (\begin{array} {cc} \beta \gamma & y U + \beta x\\ 0 & LU \end{array} \right ) =
	\left (\begin{array} {cc} y & \beta\\ L & 0 \end{array} \right ) 
	\left (\begin{array} {cc} 0 & U\\ \gamma & x \end{array} \right ).
	\]
\end{proof}

\begin{lemma}               \label{ARADHERZOG3}
	Let $n \ge 3$.
	Let $\Omega, \Psi$ be nonscalar similarity classes of $\M(n, K)$. 
	Then $\tr \Omega \Psi = K$.
\end{lemma}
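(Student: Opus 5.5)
Let $t\in K$ be the trace we want to realize. The plan is to reduce to Lemma~\ref{ARADHERZOG1} and Lemma~\ref{ARADHERZOG2} by splitting each class into cyclic (companion-type) blocks.

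\emph{Cyclic case.} First suppose $\Omega$ and $\Psi$ are both cyclic and $\det\Omega\Psi\ne0$.
\begin{itemize}
\item Apply Lemma~\ref{ARADHERZOG2} with $D\in\GL(n-1,K)$ chosen so that $\delta+\tr D=t$, where $\delta=(\det D)^{-1}\det\Omega\Psi$.
\item Since $n-1\ge2$, we can take $D$ upper triangular with diagonal $(d_1,\dots,d_{n-1})$. Then $\tr D=\sum d_i$ and $\det D=\prod d_i$.
\item Fix $d_3,\dots,d_{n-1}=1$, write $c=\det\Omega\Psi$, and set $d_1=a$, $d_2=b$. We need $c/(ab)+a+b+(n-3)=t$ with $a,b\ne0$.
\item Choose $b=1$. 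The condition becomes $a^2-(t-n+1)a+c=0$, which may have no root, so instead keep $b$ free. For $a=1$ the condition is $c/b+b=s$; with both variables free, $c/(ab)+a+b=s$.
\item Put $a=-b$. The condition becomes $-c/b^2=s$, which is solvable when $-c/s$ is a square. This is not guaranteed.
\item A cleaner route: by Lemma~\ref{ARADHERZOG2}, $D$ is an arbitrary matrix of $\GL(n-1,K)$, and $D$ need not be triangular. For $n-1\ge2$, take $D$ a companion matrix with prescribed determinant $e$ and prescribed trace $\tau$. Choose $e\neq0$ with $e\ne$ anything problematic, then set $\tau=t-c/e$.
\end{itemize}
This settles every $t$, so $\tr\Omega\Psi=K$ when both classes are cyclic and invertible.

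\emph{Singular case.} If $c=0$, then $\delta=0$ and we just need $\tr D=t$ with $\det D\ne0$. A companion matrix achieves this as well.

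\emph{General case.} Decompose $\Omega$ and $\Psi$ using rational canonical forms. The idea is to find conjugates sharing a common block decomposition $n=n_1+\dots+n_k$, such that on each block both restrictions are cyclic or one of them is nonscalar. Then the trace of the product is the sum of the blockwise traces. It suffices to have one block of size $\ge3$ with both restrictions cyclic, or one block of size $2$ where Lemma~\ref{ARADHERZOG1}(2) gives all of $K$. The remaining blocks can be fixed arbitrarily, since adding a constant to a set equal to $K$ still gives $K$.

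A nonscalar class always has a cyclic summand of size $\ge2$, namely its first invariant-factor companion block. The main obstacle is aligning the blocks of $\Omega$ and $\Psi$. Block-diagonal forms with different partitions do not commute with each other, so matching the partitions needs care.
\begin{itemize}
\item First try to group the invariant-factor blocks so that each class restricts to a nonscalar matrix on a common $2$-dimensional or $\ge3$-dimensional summand.
\item If a $2\times2$ block has $\Omega$ primary and $\Psi$ irreducible, Lemma~\ref{ARADHERZOG1} misses exactly one value. In that case enlarge the block to size $3$ by absorbing another coordinate, aiming to make both restrictions cyclic on the enlarged block. Alternatively, use a second block to shift by two different constants, so that the two missing values are covered.
\item Scalar-like pieces (e.g.\ $\Omega=\alpha\Idm_{n-2}\oplus N$) need separate handling. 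There I would pair the nonscalar block of $\Omega$ with a $3\times3$ cyclic piece and use triangular forms so that the trace computation stays explicit.
\end{itemize}

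I expect this bookkeeping of the case analysis to be the hardest part of the proof.
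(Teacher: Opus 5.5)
Your cyclic case is correct. Choosing $D$ in Lemma~\ref{ARADHERZOG2} to be a companion matrix with $\det D=e\neq0$ and $\tr D=t-e^{-1}\det\Omega\Psi$ gives trace $t$; the triangular attempts before it can be deleted. This is also how Lemma~\ref{ARADHERZOG2} enters the paper's proof. The gap is the general case. You give no argument there, and the strategy you announce cannot work in general: it needs a common decomposition on which \emph{both} $\omega$ and $\psi$ are block diagonal. Take $n=4$, $\Omega\ni J_2(\alpha)\oplus\alpha\Idm_2$ (with $J_2(\alpha)$ the $2\times2$ Jordan block), and $\Psi\ni N\oplus N$, where $N$ is the companion matrix of an irreducible $x^2-\gamma x-\delta$. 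Every $\psi$-invariant subspace is a vector space over the field $K[x]/(x^2-\gamma x-\delta)$, hence even-dimensional. So the only common partitions are $(4)$, which is just the original problem with both classes noncyclic, and $(2,2)$. Since $\rank(\omega-\alpha\Idm_4)=1$, in any $(2,2)$ splitting one block carries $\alpha\Idm_2$ and contributes the constant $\alpha\gamma$. The other block contributes $K\setminus\{\alpha\gamma\}$ by Lemma~\ref{ARADHERZOG1}(1). Hence the value $2\alpha\gamma$ is never attained. Your two fallbacks also fail here: no $\psi$-invariant $3$-dimensional block exists, and there is only one constant to shift by.

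The missing idea is that only $\Psi$ has to be block diagonal. Assume $r:=\partial\mu(\Psi)\le\partial\mu(\Omega)$. Take $Q=R\oplus S\in\Psi$ with $R$ the companion matrix of $\mu(\Psi)$, and $W=\left(\begin{smallmatrix}A&B\\ C&D\end{smallmatrix}\right)\in\Omega$. Here $A$ is only a \emph{compression} of $\omega$ to an $r$-dimensional subspace, which need not be invariant. $A$ can be taken cyclic, by using $v,v\omega,\dots,v\omega^{r-1}$ inside a cyclic subspace of dimension $\partial\mu(\Omega)\ge r$. For $r=2$ it can moreover be taken non-primary, and this is where $n\ge3$ is used. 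Because $Q$ is block diagonal,
\[
\tr\bigl(W^{X\oplus\Idm_{n-r}}Q\bigr)=\tr(A^XR)+\tr(DS),
\]
so $B$ and $C$ play no role and the second summand is constant. Your cyclic argument in dimension $r$ (for $r\ge3$), or Lemma~\ref{ARADHERZOG1}(2) (for $r=2$), then gives $\tr\{A^XR;X\in\GL(r,K)\}=K$. In the example, take $u=(0,1,1,0)^{T}$ and $v=(0,1,-1,0)^{T}$. Then $\alpha\Idm_4+uv^{T}$ lies in $\Omega$ and has top-left block $\mathrm{diag}(\alpha,\alpha+1)$, which is non-primary. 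So all traces are reached, including $2\alpha\gamma$.
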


\begin{proof}
	We may assume that $r := \partial \mu(\Psi) \le \partial \mu(\Omega)$.
	There exist matrices
	\[
	W = \left (\begin{array} {cc} A & B \\ C & D \end{array} \right ) \in \Omega,
	Q = \left (\begin{array} {cc} R & 0 \\ 0 & S \end{array} \right ) \in \Psi,
	\]
	where $A, R \in \M(r, K)$ are cyclic. Then 
	\[
	\left (\begin{array} {cc} AR^X & BS \\ CR^X & DS \end{array} \right ) \in \Omega \Psi
	\]
	for all $X \in \GL(r, K)$.
	 By lemma \ref{ARADHERZOG2}, $\tr\{ AR^X; X \in \GL(r, K)\} = K$ if $r \ge 3$. So let $r = 2$. 
	In this case, we may assume that $A = 0_1 \oplus \Idm_1$.
	By lemma \ref{ARADHERZOG1}, $\tr \{ AR^X; X \in \GL(r, K)\} = K$.
\end{proof}

\section{The traces of the product of 2 conjugacy classes of $\SL(n,K)$}

We begin with cyclic conjugacy classses.
For a conjugacy class $\Omega \subseteq \SL(n, K)$ let $\overline{\Omega}$ denote the similarity closure of $\Omega$.

\begin{lemma}               \label{ARADHERZOG3a}
	Let $\varphi \in \GL(V)$ be a cyclic linear transformation of a vector space $V$ over $K$.
	Let $n := \dim V  \ge 3$. Then in  suitable basis of $V$
	\[
	\varphi = \left (\begin{array} {cc} 0_m & \Idm_m  \\ C  & \ast
	\end{array} \right ) \mathrm{or} \ 
	\varphi = \left (\begin{array} {ccc} 0_m & 0 & \Idm_m \\ 0 & \beta & \ast \\ C & \ast & \ast
	\end{array} \right ),
	\]
	where $C \in \GL(m, K)$ is cyclic, according to whether $n=2m$ is even or $n=2m+1$ is odd.
\end{lemma}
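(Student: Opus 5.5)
Since $\varphi$ is cyclic, the whole argument can be carried out in the rational canonical picture. Choose a cyclic vector $v$, so that $v, \varphi v, \dots, \varphi^{n-1} v$ is a basis of $V$. The plan is to pick a basis adapted to the chain of subspaces $V_k = \langle v, \varphi v, \dots, \varphi^{k-1} v\rangle$, for which $\varphi(V_k) \subseteq V_{k+1}$.

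\textbf{Even case $n=2m$.} Take any $m$-dimensional subspace $U$ with $U \cap \varphi(U) = 0$ and $V = U \oplus \varphi(U)$. Let $u_1,\dots,u_m$ be a basis of $U$ and put $w_i = \varphi u_i$. In the basis $(u_1,\dots,u_m,w_1,\dots,w_m)$ the first block column of $\varphi$ is $\begin{pmatrix}0_m\\ \Idm_m\end{pmatrix}$. This is the transpose of the shape we want. It is more convenient to order the basis as $(w_1,\dots,w_m,u_1,\dots,u_m)$ and apply the same idea to $\varphi^{-1}$, or to pass to the transpose, which is similar to $\varphi$ and still cyclic. Either way we obtain a matrix of the form
\[
\left(\begin{array}{cc} 0 & \Idm_m \\ C & \ast \end{array}\right).
\]
Here $C$ is the map induced on $U$, namely $C = \pi_U \varphi^2|_U$, where $\pi_U$ is the projection along $\varphi(U)$. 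Since $\det \varphi = \pm \det C$, $C$ is invertible.

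The main obstacle is making $C$ cyclic. I would first try $U = \langle v, \varphi^2 v, \dots, \varphi^{2m-2} v\rangle$, the span of the even powers. Then $\varphi(U)$ is the span of the odd powers, $V = U \oplus \varphi(U)$, and $\varphi^2$ maps $\varphi^{2i}v$ to $\varphi^{2i+2}v$. This holds exactly for $i < m-1$. For $i = m-1$ the image $\varphi^{2m}v$ is expanded via the minimal polynomial and then projected to $U$. So $C$ is a companion-type matrix: it shifts the basis $v, \varphi^2 v, \dots$ and its last column is arbitrary. Such a matrix is cyclic, with $v$ as cyclic vector. Invertibility comes from the determinant relation above.

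\textbf{Odd case $n=2m+1$.} Use $U = \langle v, \varphi^2 v, \dots, \varphi^{2m-2} v\rangle$ and $\varphi(U) = \langle \varphi v, \dots, \varphi^{2m-1} v\rangle$, together with the extra vector $\varphi^{2m} v$. Take the ordered basis (transposing at the end if necessary) in which $U$ comes first, then the extra vector, then $\varphi(U)$. With this ordering $\varphi$ maps $U$ into $\varphi(U)$ identically, which gives the $0_m$, $0$, $\Idm_m$ pattern in the first block column up to transpose.

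The middle vector contributes the scalar $\beta$ and the $\ast$ entries. The block $C$ is again $\varphi^2$ on $U$ projected to $U$: a shift on $v, \varphi^2 v, \dots, \varphi^{2m-2}v$, whose last image $\varphi^{2m}v$ projects to $0$ in $U$ because $\varphi^{2m}v$ is now a basis vector. Hence $C$ is a nilpotent shift plus zero column. That is cyclic but singular, which fails $C \in \GL(m,K)$.

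To fix this, I would modify the middle vector to $\varphi^{2m}v - \sum_i c_i \varphi^{2i} v$ for suitable $c_i$, so that the last column of $C$ becomes $(c_i)$. Choosing $c_0 \ne 0$ makes $C$ an invertible companion matrix. I then check that the zero entries in the first block column survive this change, which holds because that column involves only $\varphi(U)$. Finally, reorder or transpose to match the displayed shape exactly. The bookkeeping of that ordering is the only delicate point.
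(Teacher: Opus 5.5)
Your even case is the paper's argument. The paper writes maps on the right ($v\varphi$), so the basis of even powers followed by odd powers gives the displayed shape without transposing. Your middle vector $\varphi^{2m}v-\sum_i c_i\varphi^{2i}v$ with $c_0\neq 0$ is the paper's $w=v\varphi^{2m}+v$.

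\textbf{The gap is in the odd case.} The target shape requires the middle row to be $(0,\beta,\ast)$. That is, $\varphi g$ (with $g$ the middle vector) must have no component in $U$ with respect to $V=U\oplus\langle g\rangle\oplus\varphi(U)$. The zeros you verify all come from $\varphi|_U$, and after transposing they only give the first block row $(0_m,0,\Idm_m)$. What your basis actually produces is
\[
\left(\begin{array}{ccc} 0_m & 0 & \Idm_m \\ a & \beta & \ast \\ C & \ast & \ast \end{array}\right)
\]
with $a$ in general nonzero. Reordering, transposing, or choosing the $c_i$ cannot fix this. Write $\chi_\varphi(x)=x^{2m+1}-\sum_{k=0}^{2m}p_kx^k$. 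Then the $U$-component of $\varphi g$ is $\sum_i (p_{2m}c_i+p_{2i})\varphi^{2i}v$. If $\tr\varphi=p_{2m}=0$, its coefficient at $v$ is $p_0=\det\varphi\neq 0$ for every choice of the $c_i$. For instance, for $n=3$ and $\varphi$ the companion matrix of $x^3-1$, your middle row is $(1,0,-c_0)$.

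\textbf{The missing step is the one the paper carries out.} Conjugate the matrix above by
\[
\left(\begin{array}{ccc} \Idm_m & 0 & 0 \\ 0 & 1 & -aC^{-1} \\ 0 & 0 & \Idm_m \end{array}\right).
\]
This kills $a$, leaves the first block row and $C$ unchanged, and only alters $\beta$ and some starred entries. In your pre-transpose basis language, with $C=\pi_U\varphi^2|_U$, replace $g$ by $g-\varphi(u_0)$ where $u_0=C^{-1}\pi_U(\varphi g)\in U$. The new vector differs from $g$ by an element of $\varphi(U)$. Hence the projection $\pi_U$ along $\langle g\rangle\oplus\varphi(U)$ is unaffected, and so are $C$ and the identity block coming from $\varphi|_U$. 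Meanwhile $\pi_U\varphi\bigl(g-\varphi(u_0)\bigr)=\pi_U\varphi g-Cu_0=0$. This is where the invertibility of $C$ is genuinely used, not merely to place $C$ in $\GL(m,K)$.
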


\begin{proof}
	If $n = 2m$ consider $\varphi$ in a basis $v, v \varphi^2, \dots,  v \varphi^{2m-2},
	v \varphi, v \varphi^3, \dots,  v \varphi^{2m-1}$.
	
	So let $n = 2m+1$ be odd. Consider $\varphi$ in a basis $v, v \varphi^2, \dots, v \varphi^{2m-2}$, $w$, $v \varphi, v \varphi^3, \dots,  v \varphi^{2m-1}$, where $w = v \varphi^{2m} + v$. In this basis, $\varphi$ is similar to a matrix
	\[
	\varphi = \left (\begin{array} {ccc} 0_m & 0 & \Idm_m \\ a & \beta & \ast \\ C & \ast & \ast
	\end{array} \right ).
	\]	
	where $C$ is cyclic and nonsingular. Hence $\varphi$ is similar to
	\[
	\left (\begin{array} {ccc} \Idm_m & 0 & 0 \\ 0 & 1 & -aC^{-1} \\ 0 & 0 & \Idm_m
	\end{array} \right )
	\left (\begin{array} {ccc} 0_m & 0 & \Idm_m \\ a & \beta & \ast \\ C & \ast & \ast
	\end{array} \right )
	\left (\begin{array} {ccc} \Idm_m & 0 & 0 \\ 0 & 1 & aC^{-1} \\ 0 & 0 & \Idm_m
	\end{array} \right )
	= \left (\begin{array} {ccc} 0_m & 0 & \Idm_m \\ 0 & \widetilde{\beta} & \ast \\ C & \ast & \ast
	\end{array} \right ).
	\]	
\end{proof}

\begin{lemma}               \label{ARADHERZOG4}
	Let $m \ge 2$. If $|K| \le 3$ assume that $m \ge 3$.
	Let $\Omega, \Psi$ be cyclic conjugacy  classes of $\SL(2m, K)$. 
	Then $\tr \Omega \Psi = K$.
\end{lemma}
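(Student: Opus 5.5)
The plan is to use the normal form of Lemma \ref{ARADHERZOG3a} to turn the trace of a product in $\SL(2m,K)$ into the trace of a product of two $m\times m$ matrices. Those smaller traces are then controlled by Lemmas \ref{ARADHERZOG1}, \ref{ARADHERZOG2} and \ref{ARADHERZOG3}.

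\emph{Step 1: normal forms.} By Lemma \ref{ARADHERZOG3a}, after conjugation $\Omega$ contains a matrix
\[
\omega=\left(\begin{array}{cc} 0_m & \Idm_m\\ C & X\end{array}\right),
\]
with $C\in\GL(m,K)$ cyclic. Apply the same lemma to $\Psi$ and conjugate by the block swap. Then $\overline{\Psi}$ contains a matrix
\[
\psi=\left(\begin{array}{cc} Y & D\\ \Idm_m & 0_m\end{array}\right),
\]
with $D$ cyclic. Since all the relevant matrices lie in $\SL$, the determinant of the conjugating element can be corrected. For example, rescale one of the basis vectors $v$ in the proof of Lemma \ref{ARADHERZOG3a}; this rescales a block and replaces $C$ by a similar matrix. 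In this way I would arrange $\psi\in\Psi$ itself, and not just $\psi\in\overline{\Psi}$. I expect this bookkeeping to be routine.

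\emph{Step 2: the trace computation.} Conjugate $\omega$ by $\operatorname{diag}(P,P)$ with $P\in\GL(m,K)$. The product becomes
\[
\left(\begin{array}{cc} 0 & \Idm\\ P^{-1}CP & P^{-1}XP\end{array}\right)
\left(\begin{array}{cc} Y & D\\ \Idm & 0\end{array}\right)
=\left(\begin{array}{cc} \Idm_m & 0\\ \ast & P^{-1}CPD\end{array}\right),
\]
whose trace is $m+\tr(P^{-1}CP\,D)$. So it suffices to show that $\tr(P^{-1}CPD)$ takes every value as $P$ ranges over the admissible conjugators. For $m\ge 3$ this is exactly the content of the proof of Lemma \ref{ARADHERZOG3}, via Lemma \ref{ARADHERZOG2}, applied to the nonscalar cyclic matrices $C$ and $D$. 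For $m=2$, use Lemma \ref{ARADHERZOG1}. If one of $C,D$ is reducible and not primary, or the other is reducible, we get all of $K$. In the remaining exceptional configuration, I would exploit the freedom in choosing the vector $v$ in Lemma \ref{ARADHERZOG3a}, and the unused blocks $X,Y$, to change $C$ or $D$ to another cyclic matrix and so leave that configuration.

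\emph{Main obstacle.} The hard part is that conjugation by $\operatorname{diag}(P,P)$ preserves the $\SL(2m,K)$-class only when $(\det P)^2$ lies in $\det$ of the centralizer of $\omega$ times $\{1\}$. In general, then, we may only use $P$ with $(\det P)^2$ in a subgroup of $K^*$. Two observations help. First, the trace depends on $P$ only up to scalars, and replacing $P$ by $\lambda P$ multiplies $(\det P)^2$ by $\lambda^{2m}$. Second, the centralizer $K[\omega]^*$ of the cyclic matrix $\omega$ contributes further determinants. I would first try to show that the matrices constructed in Lemmas \ref{ARADHERZOG1} and \ref{ARADHERZOG2} can be realized by conjugators of determinant $1$: the triangular factors $L,U$ there can be normalized to have unit diagonal except in one entry, and that entry can be absorbed into the block $\delta$. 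This reduces the problem to $\SL(m,K)$-conjugates of $C$. The hypothesis $m\ge3$ when $|K|\le3$ should be exactly what is needed when this normalization fails for $m=2$ over $\GF(2)$ and $\GF(3)$, where $\SL(2,K)$-classes are too small. Those cases would be excluded, as the statement does.
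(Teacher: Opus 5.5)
Your computation $m+\tr(P^{-1}CPD)$ is correct, but the proof has a genuine gap, exactly at what you call the main obstacle, and you do not get past it.

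\textbf{The determinant constraint is never handled.} Lemma \ref{ARADHERZOG3} (and Lemma \ref{ARADHERZOG1} for $m=2$) only says that $\tr(P^{-1}CPD)$ runs through $K$ when $P$ runs through all of $\GL(m,K)$. It says nothing when $P$ is restricted by $(\det P)^2\in\Delta$, where $\Delta=\det C_{\GL(2m,K)}(\omega)$. Passing from $\GL$ to $\SL$ is the whole content of this lemma, and ``I would first try to show \dots'' is not an argument.

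\textbf{Step 1 is the same obstruction, not bookkeeping.} Replacing $v$ by any other cyclic vector (in particular by $cv$) leaves the matrix of Lemma \ref{ARADHERZOG3a} unchanged, since it is a permuted companion matrix determined by the characteristic polynomial. Rescaling a single basis vector destroys the block $\Idm_m$, and restoring it forces a conjugator $\operatorname{diag}(P,P)$. So your mechanism only reaches determinants in $K^{*2}\Delta$. For a regular unipotent class of $\SL(2m,q)$ with $q$ odd, $\Delta=K^{*2m}$, so the $\SL$-classes attached to nonsquare cosets are never reached.

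\textbf{The $m=2$ repair fails for the same reason.} $C$ and $D$ are the companion matrices of the even-indexed coefficients of the characteristic polynomials. The blocks $X,Y$ are equally determined. So there is no freedom left to move out of the exceptional configuration.

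\textbf{The missing idea.} Conjugate $\omega$ by $\Idm_m\oplus G$ and $\psi$ by $\Idm_m\oplus H^{-1}$ instead of by $\operatorname{diag}(P,P)$. These conjugators have arbitrary determinant, so you may start with $\omega\in\overline{\Omega}$, $\psi\in\overline{\Psi}$ and land in $\Omega$, $\Psi$ only at the end. One gets
\[
\left(\begin{array}{cc} 0 & G\\ G^{-1}C & G^{-1}XG\end{array}\right)
\left(\begin{array}{cc} Y & DH^{-1}\\ H & 0\end{array}\right)
=\left(\begin{array}{cc} GH & 0\\ \ast & G^{-1}CDH^{-1}\end{array}\right),
\]
whose trace is $\tr Z+\tr(Z^{-1}CD)$ with $Z=GH$. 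The top-left block is no longer frozen at $\Idm_m$. The $\SL$-condition only prescribes $\det Z=\delta\epsilon$, where $\det G=\delta$ and $\det H=\epsilon$ are the determinants that put the factors into $\Omega$ and $\Psi$; otherwise $Z$ is free.

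This is how the paper argues. It first uses $\GL$-conjugations and Lemma \ref{ARADHERZOG2} to normalize $CD=\lambda\Idm_1\oplus M$ with $M-\lambda\Idm_{m-1}$ nonsingular. It then uses Lemma \ref{ARADHERZOG2} again to factor $CD=Z_1Z_2$ with $\det Z_1=\delta\epsilon$, $\tr Z_1=0$ and $\tr Z_2=\tau$. Taking $Z=Z_1$ gives trace $\tau$.
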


\begin{proof}	
Let $\tau \in K$. By \ref{ARADHERZOG3a}, there exist matrices
\[
W = \left (\begin {array} {cc} 0 & \Idm_m \\ C & D \end{array} \right ) \in \overline{\Omega},
Q = \left (\begin{array} {cc} E & F \\ \Idm_m & 0 \end{array} \right ) \in \overline{\Psi},
\]
where $C, D, E, F \in \M(m,K)$, and $C$ and $F$ are cyclic.

By \ref{ARADHERZOG2}, there exist $R, S \in \GL(m,K)$ such that $C^R F^S = \lambda \Idm_1 \oplus M$, where $M-\lambda \Idm_{m-1}$ is nonsingular. 
Conjugating $W$ with $R\oplus R$ and $Q$ with $S \oplus S$, we may assume that $CF = \lambda \Idm_1 \oplus M$.
There exist $\delta, \epsilon \in K^*$ such that $W^S \in \Omega$ if $\det S = \delta$ and
$Q^T  \in \Psi$ if $\det T = \epsilon$.
By \ref{ARADHERZOG2}, $CF = \Idm_1 \oplus M = Z_1 Z_2$, where
\begin{enumerate}
	\item $Z_1$ and $Z_2$ are cyclic,
	\item $\det Z_1 = \epsilon \delta$, $\tr Z_1  = 0$, and 
	\item $\tr Z_2  = \tau$.
\end{enumerate}
Now let $Z_1 = X Y$, where $\det X = \delta, \det Y = \epsilon$.
Let $S = \Idm_m \oplus X, T = \Idm_m \oplus Y$. Then $W^S \in \Omega, Q^T \in \Psi$ and
\[
W^S Q^T = \left (\begin{array} {cc} 0 & X \\ X^{-1}C & D^X \end{array} \right )
\left (\begin{array} {cc} E & FY^{-1} \\ Y & 0 \end{array} \right ) 
=
\left (\begin{array} {cc} XY & 0 \\ \ast & X^{-1}CFY^{-1} \end{array} \right ).
\]
Now $\tr W^S Q^T = \tr Z_1 + \tr Z_2^{Y^{-1}} = \tr Z_1 + \tr Z_2 = \tau$.
\end{proof}

\begin{lemma}               \label{ARADHERZOG5}
	Let $m \ge 2$.  
	Let $\Omega, \Psi$ be cyclic conjugacy  classes of $\SL(2m+1, K)$. 
	Then $\tr \Omega \Psi = K$.
\end{lemma}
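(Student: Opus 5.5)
We mimic the proof of Lemma \ref{ARADHERZOG4}, now using the odd-dimensional normal form of Lemma \ref{ARADHERZOG3a}. Fix $\tau \in K$. By Lemma \ref{ARADHERZOG3a} applied to $\Omega$, and to the transpose (or inverse-type rearrangement) of a representative of $\Psi$, there are matrices
\[
W = \left (\begin{array} {ccc} 0_m & 0 & \Idm_m \\ 0 & \beta & \ast \\ C & \ast & \ast \end{array} \right ) \in \overline{\Omega},
\qquad
Q = \left (\begin{array} {ccc} \ast & \ast & F \\ \ast & \beta' & 0 \\ \Idm_m & 0 & 0_m \end{array} \right ) \in \overline{\Psi},
\]
with $C, F \in \GL(m,K)$ cyclic. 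The zero blocks are arranged so that the product is block lower triangular with diagonal blocks indexed by the partition $m,1,m$. The middle coupling must be checked: the $(2,1)$ and $(2,3)$ entries of $Q$ should be moved to vanish by a further conjugation of the kind used at the end of Lemma \ref{ARADHERZOG3a}. Then
\[
WQ = \left (\begin{array} {ccc} \Idm_m & 0 & 0 \\ \ast & \beta\beta' & 0 \\ \ast & \ast & CF \end{array} \right ).
\]

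Next we conjugate by $S = \Idm_m \oplus 1 \oplus X$ on $W$ and $T = \Idm_m \oplus 1 \oplus Y$ on $Q$. As in Lemma \ref{ARADHERZOG4}, $W^S \in \Omega$ and $Q^T \in \Psi$ precisely when $\det X = \delta$ and $\det Y = \epsilon$ for suitable $\delta, \epsilon \in K^*$. This conjugation turns the top-left block into $XY$ and the bottom-right block into a conjugate of $X^{-1}CFY^{-1}$, with the middle entry still $\beta\beta'$. Hence
\[
\tr W^SQ^T = \tr Z_1 + \beta\beta' + \tr Z_2, \qquad Z_1 = XY,\; Z_1Z_2 = CF.
\]
Before this step we normalize $CF$ using Lemma \ref{ARADHERZOG2}, conjugating $W$ by $R\oplus 1\oplus R$ and $Q$ by $R'\oplus 1\oplus R'$, so that $CF$ has a convenient cyclic-type form. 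By Lemma \ref{ARADHERZOG2}, or the factorization step already used in Lemma \ref{ARADHERZOG4}, we then write $CF = Z_1Z_2$ with $Z_1, Z_2$ cyclic, $\det Z_1 = \delta\epsilon$, $\tr Z_1 = 0$ and $\tr Z_2 = \tau - \beta\beta'$. Finally we split $Z_1 = XY$ with the prescribed determinants, which gives trace $\tau$.

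The main obstacle is the factorization $CF = Z_1Z_2$ with simultaneously prescribed determinant and trace of $Z_1$ and arbitrary trace of $Z_2$ when $m=2$ and $|K|\le 3$. This is the case excluded in Lemma \ref{ARADHERZOG4} but not excluded here. The extra central $1\times 1$ block is the source of slack. First we try exploiting the freedom in $\beta$: the vector $v$ in Lemma \ref{ARADHERZOG3a} can be varied, or a scalar can be traded between the middle entry and $Z_1$. If that fails, we treat the small cases ($m=2$, $K=\GF(2),\GF(3)$, so $\SL(5,K)$) by direct explicit matrices in the style of Lemma \ref{ARADHERZOG1}.

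A secondary issue is ensuring the middle row and column coupling terms vanish so that the product is genuinely block triangular. If they do not, we instead compute the trace directly: the off-diagonal contributions to the trace are $\mathrm{row}\cdot\mathrm{column}$ products that can be absorbed into the choice of $\tau$, since $X$ and $Y$ are free apart from their determinants.
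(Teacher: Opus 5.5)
There is a genuine gap, in the case you flag yourself: $m=2$ and $|K|\le 3$, i.e.\ $\SL(5,2)$ and $\SL(5,3)$, which the statement does not exclude. For $|K|\ge 4$ your argument is essentially the paper's: the same normal forms from Lemma~\ref{ARADHERZOG3a}, the same block lower triangular product, and the same factorization of $CF\sim\lambda\Idm_1\oplus M$ via Lemma~\ref{ARADHERZOG2}. The paper's bookkeeping is lighter than your $\delta,\epsilon$ version, which also works. It conjugates by $\Idm_m\oplus\lambda\Idm_1\oplus\Idm_m$, which has arbitrary determinant and preserves both shapes, so it gets $W\in\Omega$ and $Q\in\Psi$ outright. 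It then only conjugates $W$ by $\Idm_{m+1}\oplus X$ with $\det X=1$. Your ``secondary issue'' is not an issue. The first block row of $W$ is $(0,0,\Idm_m)$ and the $(2,3)$ entry of $Q$ is $0$, so $WQ$ is block lower triangular whatever the $(2,1)$ entry of $Q$ is; nothing needs to be moved.

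In the small case the normalization $CF\sim\lambda\Idm_1\oplus\mu$ with $\mu\ne\lambda=\det(CF)/\mu$ can fail. Over $\GF(2)$ it is never available. Over $\GF(3)$ it fails whenever $\det CF=1$, since every $\mu\in K^*$ has $\mu^2=1$. Your proposed remedies, varying $\beta$ or ``direct explicit matrices'', are not an argument.

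The missing observation is simple. For $|K|\le 3$ and $n=2m+1$ odd, the $\SL(n,K)$-conjugacy classes are already $\GL(n,K)$-similarity classes. This is trivial for $\GF(2)$. For $\GF(3)$ it holds because $-\Idm_n$ is central with determinant $-1$, so $\GL(n,3)=\SL(n,3)\times\langle -\Idm_n\rangle$ and conjugation by $\GL$ equals conjugation by $\SL$. Hence Theorem~\ref{theorem-1} (Lemma~\ref{ARADHERZOG3}) gives $\tr\Omega\Psi=K$ at once, for every $m\ge 2$. One may therefore assume $|K|\ge 4$ from the start, which is exactly what the paper does.
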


\begin{proof} 
	If  $|K| \le 3$, then $\Omega$ and $\Psi$ are similarity classes, and we are already done by \ref{theorem-1}. So let $|K| \ge 4$. By \ref{ARADHERZOG3a}, there exist matrices 
	\[
	W = \left (\begin{array} {ccc} 0 & 0 & \Idm_m\\ 0 & \beta & x \\ C & d & Y  \end{array} \right ) \in \overline{\Omega}
	\ \mathrm{and} \ 
	Q = \left (\begin{array} {ccc} E & q & F\\ p & \gamma & 0 \\ \Idm_m & 0 & 0  \end{array} \right ) \in \overline{\Psi},
	\]
	where $C, Y, E, F \in \M(m,K)$, and $C$ and $F$ are cyclic. As in \ref{ARADHERZOG4}, we may assume that $CF = \lambda \Idm_1 \oplus M$ for some fixfree matrix $M$, where $M - \lambda \Idm_{m-1}$ is nonsingular. And conjugating $W$ by suitable matrix
	$\Idm_m \oplus \lambda \Idm_1 \oplus \Idm_m$ and $Q$  by a matrix $\Idm_m \oplus \mu \Idm_1 \oplus \Idm_m$, we may further assume that $W \in \Omega, Q \in \Psi$.
	
	Again by \ref{ARADHERZOG2}, $CF = X Z$, where 
	\begin{enumerate}
		\item $X$ and $Z$ are cyclic,
		\item $\det X = 1$,  $\tr X  = 0$ and $\tr Z  = \tau -\beta \gamma$.
	\end{enumerate}
	Let $T = \Idm_{m+1} \oplus X$.
	\[
	W^T Q = \left (\begin{array} {ccc} 0 & 0 & X\\ 0 & \beta & x \\ X^{-1}C & d & Y  \end{array} \right )  
	\left (\begin{array} {ccc} E & q & F\\ p & \gamma & 0 \\ \Idm_m & 0 & 0  \end{array} \right ) =
	\left (\begin{array} {ccc} X & 0 & 0\\ \ast & \beta \gamma & 0 \\ \ast & \ast & Z  \end{array} \right ).
	\]
\end{proof}

\begin{lemma}               \label{ARADHERZOG6}
	Let $K$ be finite.
	Let $\Omega, \Psi$ be nonscalar conjugacy  classes of $\SL(3, K)$. 
	Then $\tr \Omega \Psi = K$.
\end{lemma}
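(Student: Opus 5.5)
Write $K=\GF(q)$. The plan is to reduce as far as possible to Theorem \ref{theorem-1} (equivalently Lemma \ref{ARADHERZOG3}) and then handle the remaining cases by explicit matrices in $\SL(3,q)$.

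The link between the two settings is as follows. A similarity class $\overline{\Omega}\cap\SL(3,q)$ splits into at most $\gcd(3,q-1)$ $\SL$-classes. It does not split unless $\Omega$ has a centralizer in $\GL(3,q)$ whose determinants do not cover $K^*$. For $\SL(3,q)$ this happens only for two types: the cyclic classes whose characteristic polynomial is $(x-\alpha)^3$ with a single Jordan block (so $\alpha^3=1$), and $3\mid q-1$. Every other nonscalar class of $\SL(3,q)$ is a full similarity class. Every semisimple class is among these, since its centralizer contains a torus whose determinant map is surjective. Similarly, every class with Jordan type $(2,1)$ is among these, since its centralizer contains $\Idm_2\oplus t$.

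So if neither $\Omega$ nor $\Psi$ is a regular unipotent class times a scalar $\alpha$ with $\alpha^3=1$, Theorem \ref{theorem-1} gives $\tr\Omega\Psi=K$ immediately. Also, if $3\nmid q-1$ every class is a similarity class, and we are done again. Moreover, if $\Omega=\overline{\Omega}$ is a full similarity class, then $\Omega\Psi=\Omega\overline{\Psi}$, because conjugating by $g\in\GL$ with $\psi^g\in\Psi$ gives $\omega\psi=(\omega^{g})^{g^{-1}}\ldots$. More precisely, $\tr(\omega\psi^g)=\tr(\omega^{g^{-1}}\psi)$ and $\omega^{g^{-1}}\in\Omega$. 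So it suffices that one of the two classes is a full similarity class. The only remaining case is therefore that both $\Omega$ and $\Psi$ are split classes. After multiplying by cube roots of unity, which scales traces by a nonzero constant and so preserves the property $\tr=K$, we may take $\Omega$ and $\Psi$ to be classes of regular unipotent elements $u$ and $u'$ with $3\mid q-1$.

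For this remaining case I would write explicit representatives. Conjugation by diagonal matrices $\mathrm{diag}(1,1,t)$ moves between the three $\SL$-classes of regular unipotents. In upper unitriangular form with superdiagonal entries $a,b$, the class is determined by $ab$ modulo cubes. So $\Omega$ contains all upper unitriangular matrices $\begin{pmatrix}1&a&c\\0&1&b\\0&0&1\end{pmatrix}$ with $ab\in s(K^*)^3$, for some fixed $s$. Likewise $\Psi$ contains the lower unitriangular matrices $\begin{pmatrix}1&0&0\\a'&1&0\\c'&b'&1\end{pmatrix}$ with $a'b'\in s'(K^*)^3$, since the transpose of a lower unitriangular matrix is conjugate to an upper one within $\SL$. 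The product has trace
\[
3+aa'+bb'+cc'+\ldots
\]
and by computing directly one gets an expression of the form $3+aa'+bb'+cc'$. Since $c$ and $c'$ are free, choosing $c'\neq0$ and $c$ arbitrary makes $cc'$ take every value. Hence every trace is attained.

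The main obstacle I expect is verifying the classification of which $\SL(3,q)$ classes split, and checking that the $\SL$-class of a regular unipotent is invariant under the free parameter $c$. This needs care in characteristic $3$: there $(x-\alpha)^3$ forces $\alpha=1$, and $3\nmid q-1$ holds automatically, so the split case does not arise.
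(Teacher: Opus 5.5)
Your proof is correct in substance, but it takes a different route from the paper. The paper cites Newman's theorem to conclude that a class of $\SL(3,K)$ which is not a similarity class has no irreducible elementary divisor, hence is of type $J_3(\lambda)$. It then writes both classes as $\left(\begin{array}{cc} A & b\\ 0 & \lambda\end{array}\right)$ with $\mu(A)=(x-\lambda)^2$ and conjugates the first by $X\oplus(\det X)^{-1}\in\SL(3,K)$. This gives trace $\tr X^{-1}AX\widetilde{A}+\lambda\mu$, which fills $K$ by Lemma \ref{ARADHERZOG1}.

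You instead replace the citation by the index $[K^*:\det C_{\GL}(g)]$. Finiteness of $K$ is used only to make the norm $\GF(q^3)^*\to\GF(q)^*$ surjective for irreducible cubic classes. This pins the split classes down to $\alpha J_3(1)$ with $\alpha^3=1$ and $3\mid q-1$. You then scale away $\alpha$ and finish with the formula $\tr(ul)=3+aa'+bb'+cc'$ for an upper times a lower unitriangular matrix. Your version is self-contained and says precisely which classes split. The paper's is shorter because it reuses the $2\times 2$ lemma and needs no normalization to unipotent classes.

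One claim in your last step is false and should be repaired: $ab$ modulo cubes is not an $\SL$-invariant of $u(a,b,c)$. Conjugation by $\mathrm{diag}(1,s,s^{-1})\in\SL(3,K)$ sends $(a,b)$ to $(as,bs^{-2})$, which multiplies $ab$ by $s^{-1}$. The correct invariant is $a^2b$ modulo $(K^*)^3$. For example, over $\GF(7)$ the pairs $(1,1)$ and $(2,4)$ have equal $ab$ but lie in different $\SL$-classes. So $\Omega$ does not contain all unitriangular matrices with $ab\in s(K^*)^3$.

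Your conclusion does not depend on this, though. Fix one $u(a,b,0)\in\Omega$ and one lower unitriangular element of $\Psi$ with subdiagonal entries $a',b'$. Such an element exists: conjugate an upper unitriangular element of $\Psi$ by $-J$, where $J$ is the antidiagonal permutation matrix and $\det(-J)=1$ for $n=3$. With $E_{ij}$ the matrix units, conjugating by $\Idm_3+yE_{23}$ replaces $c$ by $c+ya$, and conjugating by $\Idm_3+yE_{21}$ replaces $c'$ by $c'+yb'$. Hence $c$ and $c'$ are free inside the classes, and $3+aa'+bb'+cc'$ takes every value in $K$.
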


\begin{proof}
	By \ref{theorem-1}, we may assume that $\Omega, \Psi$ both are not similarity classes of $\GL(3, K)$. By \cite[THEOREM 2]{Newman-1982}, $\Omega$ and $\Psi$ have no irreducible elementary divisor. Hence there exist matrices
	\[
	\left (\begin{array} {cc} A  & b \\ 0 &  \lambda \end{array} \right ) \in \Omega,
	\left (\begin{array} {cc} \widetilde{A}  &  \widetilde{b}  \\ 0 & \mu  \end{array} \right ) \in \Psi,
	\]
	where $\mu(A) = (x-\lambda)^2, \mu(\widetilde{A}) = (x-\mu)^2$.
	Then 
	\[
	\left (\begin{array} {cc} X^{-1}  & 0 \\ 0 &  \det X \end{array} \right )
	\left (\begin{array} {cc} A  & b \\ 0 &  \lambda \end{array} \right )
	\left (\begin{array} {cc} X  & 0 \\ 0 &  \det X^{-1} \end{array} \right )
	\left (\begin{array} {cc} \widetilde{A}  &  \widetilde{b}  \\ 0 & \mu  \end{array} \right ) \in \Omega\Psi
	\]
	has trace $\tr X^{-1} A X \widetilde{A} + \lambda \mu$. We are done by lemma \ref{ARADHERZOG1}.
\end{proof}

\begin{remark}               \label{ARADHERZOG6a}
	Let $\Omega, \Psi \subseteq$ be  conjugacy classes of $\GL(2,3)$, where $\mu(\Omega) = x^2+1, \mu(\Psi) = x^2+x-1$. Then  $\Omega, \Psi$ and $-\Psi$ are the irreducible conjugacy classes of $\GL(2,3)$, and 
	\begin{enumerate}
		\item  $\Omega^2 = \Omega \cup \Idm_2 \cup -\Idm_2$,
		\item $\Psi^2 = \Omega \cup -\Idm_2 \cup U_2$,
		\item  $\Omega \Psi = -\Psi \cup \Psi \cup \Idm_1 \oplus -\Idm_1$,
	\end{enumerate}
	where $\mu(U_2) = (x-1)^2$.
	In particular, $\tr \Omega \Psi = \tr \Omega^2 = \GF(3)$.
\end{remark}

It can be shown $\tr \Omega \Psi = \GF(q)$ for arbitrary irreducible conjugacy classes 
$\GL(2,q)$.

\begin{lemma}               \label{ARADHERZOG7}
	Let $\Omega, \Psi$ be nonscalar conjugacy  classes of $\SL(4, 3)$. 
	Then $\tr \Omega \Psi = \GF(3)$.
\end{lemma}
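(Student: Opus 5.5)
The plan is to reduce to the few conjugacy classes of $\SL(4,3)$ that are \emph{not} similarity classes of $\GL(4,3)$, and to handle those by explicit block constructions. If $\Omega$ or $\Psi$ equals its similarity closure in $\GL(4,3)$, Theorem \ref{theorem-1} already gives $\tr\Omega\Psi=\GF(3)$. So I will assume both split. Since $K^*=\{\pm1\}$, a class $\overline{\Omega}$ splits in $\SL(4,3)$ exactly when every element of the centralizer of $\varphi\in\overline{\Omega}$ has square determinant.

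For a primary component with elementary divisors $p^{e_1},\dots,p^{e_k}$, the determinants of centralizing elements contain $N(a)^{e_i}$, where $N$ is the norm from $K[x]/(p)$ to $K$, which is surjective. Hence the class splits only if all $e_i$ are even. Going through the possibilities in dimension $4$ with determinant $1$, I expect the split classes to be:
\begin{enumerate}
\item $\lambda J_4$ with $\lambda=\pm1$;
\item $\lambda J_2\oplus\lambda J_2$;
\item $J_2(1)\oplus J_2(-1)$;
\item the companion matrix of $p^2$, where $p$ is an irreducible quadratic with $p(0)=\pm1$.
\end{enumerate}
Types (1), (3) and (4) are cyclic. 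The first step is to verify this list carefully, including which determinants actually occur.

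Case A: both classes are cyclic. Here I would follow the proof of Lemma \ref{ARADHERZOG4} with $m=2$. By Lemma \ref{ARADHERZOG3a}, choose $W=\left(\begin{array}{cc}0&\Idm_2\\ C&D\end{array}\right)\in\overline{\Omega}$ and $Q=\left(\begin{array}{cc}E&F\\ \Idm_2&0\end{array}\right)\in\overline{\Psi}$, and conjugate by $\Idm_2\oplus X$ and $\Idm_2\oplus Y$. The trace of the product is then $\tr XY+\tr X^{-1}CFY^{-1}$. Lemma \ref{ARADHERZOG4} fails for $|K|=3$ because it needs a factorization $CF=Z_1Z_2$ with $\tr Z_1=0$, $\tr Z_2=\tau$ and prescribed $\det Z_1$ inside the constrained shape $\lambda\Idm_1\oplus M$. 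In $\M(2,3)$ I would instead choose $X,Y$ directly, with $\det X$ and $\det Y$ forced to the values $\delta,\epsilon$ that keep $W^S\in\Omega$ and $Q^T\in\Psi$. The goal is to make $\tr XY+\tr (CF)Y^{-1}X^{-1}$ take all three values $0,1,-1$. Since $C,F$ are only determined up to simultaneous conjugation and $\GL(2,3)$ has just $48$ elements, this is a finite check. I would organise it by the similarity class of $CF$, using Lemma \ref{ARADHERZOG1} and Remark \ref{ARADHERZOG6a} to control the traces of $XY$ and of $CF\,(XY)^{-1}$.

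Case B: at least one class, say $\Omega$, is of type (2). Then $\Omega$ contains $\lambda\left(\begin{array}{cc}\Idm_2&\Idm_2\\0&\Idm_2\end{array}\right)$ and its conjugates by $\mathrm{diag}(X,X)$, and more generally by $\mathrm{diag}(X,X')$ where $\det XX'$ is a square. Take $\Psi$ in block upper triangular form $\left(\begin{array}{cc}P&\ast\\0&P'\end{array}\right)$ with $2\times 2$ blocks; this exists for every class on the list. The trace of the product is then $\lambda(\tr P+\tr P')$ plus a term linear in the off-diagonal block of $\Psi$. Varying that block, or conjugating $\Omega$ by $\left(\begin{array}{cc}\Idm_2&Z\\0&\Idm_2\end{array}\right)$, which has determinant $1$ and so stays inside $\Omega$, should sweep out all of $\GF(3)$. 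This is the $4$-dimensional analogue of the argument in Lemma \ref{ARADHERZOG1}, where the unipotent-type factor contributes a free linear term.

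I expect Case A to be the main obstacle, specifically the pairs where both classes have type (1) or type (4). There the determinant constraint on $X,Y$ removes half of $\GL(2,3)$, and Remark \ref{ARADHERZOG6a} shows that traces of products of irreducible $2\times2$ classes over $\GF(3)$ behave irregularly. If the uniform construction fails, my fallback is to write down explicit pairs $\omega\in\Omega$, $\psi\in\Psi$ realising traces $0,1,-1$ for each of the finitely many pairs of split classes, using the symmetries $\Omega\mapsto-\Omega$ and $\Omega\mapsto\Omega^{-1}$ to reduce the number of cases.
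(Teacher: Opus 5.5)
Your reduction to the split classes and the list (1)--(4) are correct; this corresponds to the paper's ``no irreducible elementary divisor'' step. The gaps come after it.

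\textbf{Case B fails as written.} For $\omega=\lambda\left(\begin{array}{cc}\Idm_2&N\\0&\Idm_2\end{array}\right)$ and $\psi=\left(\begin{array}{cc}P&B\\0&P'\end{array}\right)$ one gets $\omega\psi=\lambda\left(\begin{array}{cc}P&B+NP'\\0&P'\end{array}\right)$, so $\tr\omega\psi=\lambda\tr\psi$ identically. No term linear in $B$ occurs. The matrix $\left(\begin{array}{cc}\Idm_2&Z\\0&\Idm_2\end{array}\right)$ commutes with $\omega$, and conjugating by $\mathrm{diag}(X,X')$ only changes $N$, which never reaches the diagonal. So you get a single trace value. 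The analogy with Lemma \ref{ARADHERZOG1} breaks because there the free entry $\mu$ is multiplied by the nonzero lower-left entry $\nu$ of the other factor.

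This can be repaired by taking $\psi$ block \emph{lower} triangular, $\psi=\left(\begin{array}{cc}P&0\\B&P'\end{array}\right)$ with $B\neq 0$, which is possible for every class on your list. Then $\tr\omega\psi=\lambda(\tr\psi+\tr NB)$. Conjugation by $\mathrm{diag}(\Idm_2,X')$ with $X'\in\SL(2,3)$ replaces $N$ by any matrix of the same determinant, and $\tr NB$ then covers $\GF(3)$. Note also that only one of the two $\SL(4,3)$-classes in $\overline{\Omega}$ contains $\lambda\left(\begin{array}{cc}\Idm_2&\Idm_2\\0&\Idm_2\end{array}\right)$; the other needs $\det N=-1$.

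\textbf{Case A is not carried out.} You flag it as the main obstacle but leave it as an unperformed ``finite check'', and the freedom you name is not enough. If $C,F$ are fixed up to simultaneous conjugation, then $G:=CF$ is fixed up to similarity, and the construction genuinely fails. Take $\Omega=\Psi$ of type (1) with eigenvalue $1$:
\begin{itemize}
\item Lemma \ref{ARADHERZOG3a} gives $C=\left(\begin{array}{cc}0&1\\-1&0\end{array}\right)$.
\item Swapping the two halves, a determinant-one permutation, gives $Q$ with $F=C$, so $G=-\Idm_2$.
\item Since $\Omega=\Psi$ we have $\delta=\epsilon$, which forces $\det XY=1$.
\end{itemize}
Hence $\tr XY+\tr G(XY)^{-1}=0$ for all admissible $X,Y$.

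Two ingredients are missing. First, $W$ and $Q$ may be conjugated \emph{independently} by $R\oplus R$ and $S\oplus S$, which have determinant $(\det R)^2=1$. So $G$ ranges over $\overline{C}\,\overline{F}$, which is never a single scalar matrix because $C$ is nonscalar. Second, for $Z\in\M(2,3)$ with $\det Z=\eta\in\{\pm1\}$,
\[
\tr Z+\tr GZ^{-1}=\tr\big(((1+\eta\tr G)\Idm_2-\eta G)Z\big),
\]
and the right-hand side runs through all of $\GF(3)$ as $Z$ runs over $\det Z=\eta$, unless $G=-\eta\Idm_2$. With these two points Case A closes.

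\textbf{How the paper does it.} The paper avoids the companion shapes entirely. It puts both classes in block upper triangular form with indecomposable $2\times2$ diagonal blocks and conjugates these blocks directly. It uses $X\oplus Y$ with $X,Y\in\SL(2,3)$, or $X\oplus(\Idm_1\oplus\det X\,\Idm_1)$ with $X\in\GL(2,3)$ when the second blocks are triangular. The trace is then $\tr A^XR+\tr D^YT$. It finishes with Lemma \ref{ARADHERZOG1}, Remark \ref{ARADHERZOG6a}, and the fact that two $2$-element subsets of $\GF(3)$ sum to $\GF(3)$.
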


\begin{proof}
	Again, we may assume that $\Omega$ and $\Psi$ have no irreducible elementary divisor. So there exist matrices $B, D, C, R$ such that
	\[
	W := \left (\begin{array} {cc} A  & \ast \\ 0 & D \end{array} \right ) \in \Omega,
	 Q:=  \left (\begin{array} {cc} R  &  \ast \\ 0 & T \end{array} \right )\in \Psi,
	\]
	where $A, D, R, T \in \GL(2, 3)$ are indecomposable.
	
	First let $\mu(W), \mu(Q) \in  \{(x^2+1)^2, (x^2 \pm x -1)^2\}$.
Now if $\det X = 1$, then 
	\[
	\left (\begin{array} {cc} A^X  & \ast \\ 0 & D \end{array} \right ) 
	\left (\begin{array} {cc} R  &  \ast\\ 0 & T\end{array} \right )
	= \left (\begin{array} {cc} A^X R & \ast \\ 0 & D T \end{array} \right ) 
	\in \Omega\Psi.
	\]
	By \ref{ARADHERZOG6a}, $\tr \Omega \Psi = \GF(3)$.	
	
	Next let $P^2$ and $Q^2$ be unipotent. Then we may assume that $D$ and $T$ are (upper) triangular. For $X \in \GL(2,3)$ let $Y =  \Idm_1 \oplus \det X \Idm_1$. Then 
	\[
	\left (\begin{array} {cc} A^X  & \ast \\ 0 & D^Y \end{array} \right ) 
	\left (\begin{array} {cc} R  &  \ast\\ 0 & T\end{array} \right )
	= \left (\begin{array} {cc} A^X R & \ast \\ 0 & D^Y T \end{array} \right ) 
	\in \Omega\Psi
	\]
	has trace $ \tr A^X R + \tr DT$. 
	
	Finally, let $Q^2$ be unipotent and $\mu(W) \in  \{(x^2+1)^2, (x^2 \pm x -1)^2\}$.
	If $\det X = \det Y = 1$, then 
	 \[
	 \left (\begin{array} {cc} A^X  & \ast \\ 0 & D^Y \end{array} \right ) 
	 \left (\begin{array} {cc} R  &  \ast\\ 0 & T\end{array} \right )
	 = \left (\begin{array} {cc} A^X R & \ast \\ 0 & D^Y T \end{array} \right )
	 \in \Omega\Psi.
	 \]
	 By lemma \ref{ARADHERZOG1}, $|\tr \{A^X R; X \in \SL(2, K) \}| = 2 = |\tr \{D^Y T; Y \in \SL(2, K) \}|$ so that 
	 $|\tr \Omega \Psi| = 3$.
	 We are done.
\end{proof}

It remains to consider the case that at least one conjugacy class is noncyclic.

For a $n \times n$ matrix $M$, let $\mr(M) := \min \{\rank(M - \lambda); \lambda \in \overline{K} \}$, where $\overline{K}$ is an algebraic closure of $K$, denote the minimal rank of $M$.

\begin{lemma}               \label{ARADHERZOG8}
 Let $M \in \GL(n, K)$, and let $A \in \M(m, K)$, where $m \le \mr(A), \frac{n}{2}$.
	Then $M$ is similar to a matrix
	\[
	\left (\begin{array} {cc} A & \ast \\ \ast & \ast \end{array} \right ).
	\]
\end{lemma}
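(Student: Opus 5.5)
Note first that the hypothesis has to be read as $m \le \mr(M)$ and $m \le \frac{n}{2}$. If it referred to $A$, it could never hold for $m\ge 1$: $A-\lambda$ is singular for any eigenvalue $\lambda\in\overline{K}$ of $A$, so $\mr(A) < m$. I prove the statement under this reading.

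\textbf{Reduction to a subspace.} I would prove the statement in basis-free form. Let $M$ act on $V=K^n$ by $v \mapsto vM$. Suppose we find an $m$-dimensional subspace $U$ with $\dim(U+UM)=2m$, i.e.\ $U \cap UM = 0$. Choose a basis $u_1,\dots,u_m$ of $U$ and put
\[
w_i := u_iM - \sum_j a_{ij}u_j, \qquad A=(a_{ij}).
\]
Because the $u_iM$ are independent modulo $U$, the $w_i$ are independent and $\langle w_1,\dots,w_m\rangle \cap U=0$. Extend $w_1,\dots,w_m$ to a complement $W$ of $U$ in $V$. In the basis $u_1,\dots,u_m$ followed by a basis of $W$ starting with the $w_i$, we have $u_iM \equiv \sum_j a_{ij}u_j \pmod W$. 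So the upper left $m\times m$ block of $M$ is exactly $A$, for every $A$. The lemma therefore reduces to a statement about $M$ alone: if $m\le \mr(M)$ and $2m\le n$, there is an $m$-dimensional $U$ with $U\cap UM=0$.

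\textbf{Constructing $U$ from the rational canonical form.} Write $V=V_1\oplus\dots\oplus V_s$ as a sum of $M$-cyclic subspaces with invariant factors $c_1\mid c_2\mid\dots\mid c_s$ and $d_j=\dim V_j$. Then $s$ is the maximal geometric multiplicity of an eigenvalue over $\overline{K}$, so $\mr(M)=n-s$. Hence the hypothesis says $n-s\ge m$ and $n\ge 2m$.

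In a cyclic block with cyclic vector $v$, I use the even/odd splitting of Lemma~\ref{ARADHERZOG3a}: take $v, vM^2, vM^4,\dots$. This gives $\lfloor d_j/2\rfloor$ vectors whose $M$-images $vM, vM^3,\dots$ are independent of them and of each other. The blocks with $d_j$ odd each leave one unpaired vector $e_j$. Between them, distinct blocks contribute independent spans, since the sum is direct.

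The count $\sum_j\lfloor d_j/2\rfloor$ may fall short of $m$; this happens when many $d_j$ are odd. To make up the deficit, I pair leftovers from different odd blocks, using $u=e_i+e_j$ with $e_i,e_j$ taken from the top parts of the blocks, in the spirit of the vector $w$ in Lemma~\ref{ARADHERZOG3a}. Then $u$ and $uM$ are independent unless $e_i,e_j$ are eigenvectors for a common eigenvalue. The term $-s$ in $n-s\ge m$ is exactly what rules out having too many blocks of size $1$ with the same eigenvalue. Combined with $n \ge 2m$, it should give enough pairs in total.

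\textbf{Main obstacle.} The main difficulty is this counting and pairing step: checking that the vectors chosen across blocks, together with their images, span a $2m$-dimensional space. This is delicate mainly when there are several $1$-dimensional blocks, i.e.\ scalar parts $\lambda$, where the bound $m\le \mr(M)$ is sharp.

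If the bookkeeping gets messy, I would instead do a greedy induction on $k<m$. Given $U_k$ with $\dim(U_k+U_kM)=2k$, seek $v$ with $\dim(U_k+U_kM+\langle v,vM\rangle)=2k+2$. Failure means that for every $v$, some $v(M-\lambda)$ with $\lambda\in K\cup\{\infty\}$ lies in $U_k+U_kM$. A lemma on pairs of maps whose values are everywhere dependent should then force $\rank(M-\lambda)$ to be small for a single $\lambda$, contradicting $k<m\le\mr(M)$. The subtle points there are finite fields and arranging the choice so that the bound obtained is $\le k$ rather than $\le 2k$.
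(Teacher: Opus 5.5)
Your reading of the hypothesis as $m\le \mr(M)$, $2m\le n$ is right, and your reduction is correct. It is exactly the second half of the paper's proof. The paper takes a basis in which the first block row of $M$ is $(0_m\ \Idm_m\ \ast)$, i.e.\ a basis $u_1,\dots,u_m,u_1M,\dots,u_mM,\dots$ with $U\cap UM=0$. It then conjugates by the shear with $A$ in the $(2,1)$ block, and your vectors $w_i=u_iM-\sum_j a_{ij}u_j$ perform the same shear.

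\textbf{The gap.} What is missing is the existence of $U$, which is the entire substance of the lemma. The paper does not prove this either; it imports it from Sourour's theorem. You try to prove it and stop at ``it should give enough pairs in total''. Your fallback greedy induction rests on an unstated lemma about locally dependent pairs of maps. As you suspect, a straightforward version of that argument bounds $\rank(M-\lambda)$ only by $2k+1$, which does not contradict $k<m\le\mr(M)$.

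\textbf{The pairing step is wrong, not just unchecked.} Independence of $u$ and $uM$ is not the relevant condition. You need $uM\notin\langle u\rangle+(U_0+U_0M)$, where $U_0$ is what you have already chosen. Take odd blocks $V_i,V_j$ with $e_i=v_iM^{d_i-1}$. All other basis vectors $v_i,\dots,v_iM^{d_i-2}$ and $v_j,\dots,v_jM^{d_j-2}$ already lie in $U_0+U_0M$. Modulo their span, $uM\equiv \tr(M|_{V_i})\,e_i+\tr(M|_{V_j})\,e_j$. So $u=e_i+e_j$ fails whenever the two blocks have equal trace, not only for $1$-dimensional blocks with a common eigenvalue.

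For example, let $M=C\oplus C$ with $C$ an invertible $3\times 3$ companion matrix. Then $n=6$ and $\mr(M)=4$, so $m=3$ is allowed. But $U=\langle v_1,v_2,e_1+e_2\rangle$ gives $\dim(U+UM)=5$. In fact every $u\in\langle e_1,e_2\rangle$ fails, since $uM\equiv(\tr C)\,u$ modulo $\langle v_1,v_2,v_1M,v_2M\rangle$. Here $U=\langle v_1,v_2,v_1M^2+v_2M\rangle$ does work. So a correct construction must sometimes pair the top vector of one odd block with a lower vector of another. The general bookkeeping, including scalar parts and small fields, is precisely what is missing.

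To close the gap, either cite the existence result for $U$, as the paper does, or give a complete proof of it.
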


\begin{proof}
	By \cite[THEOREM]{Sourour-1986}, $M$ is similar to a matrix
	\[
	\left (\begin{array} {ccc} 0_m & \Idm_m & \ast\\ \ast & \ast & \ast \\ \ast & \ast & \ast\end{array} \right ).
	\]
	Put $k = n-2m$. Then 
	\[
	\left (\begin{array} {ccc} \Idm_m & 0 & 0 \\ -A & \Idm_m & 0  \\ 0 & 0 & \Idm_{k}  \end{array} \right )
	\left (\begin{array} {ccc} 0_m & \Idm_m & \ast\\ \ast & \ast & \ast \\ \ast & \ast & \ast\end{array} \right )
	\left (\begin{array} {ccc} \Idm_m & 0 & 0 \\ A & \Idm_m & 0  \\ 0 & 0 & \Idm_{k}  \end{array} \right )
	= \left (\begin{array} {ccc} A & \Idm_m & \ast\\ \ast & \ast & \ast \\ \ast & \ast & \ast\end{array} \right ).
	\]
\end{proof}

\begin{lemma}               \label{ARADHERZOG9}
	Let $n \ge 4$. 
	Let $\Omega, \Psi$ be nonscalar conjugacy  classes of $\SL(n, K)$. 
	Then $\tr(\Omega \Psi) = K$.
\end{lemma}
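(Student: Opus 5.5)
We reduce to the case where at least one of $\Omega,\Psi$ is noncyclic, since the cyclic case is covered. If both classes are cyclic, Lemma \ref{ARADHERZOG4} (for $n=2m$, with Lemma \ref{ARADHERZOG7} covering $n=4$, $|K|=3$) and Lemma \ref{ARADHERZOG5} (for $n=2m+1\ge 5$) already give $\tr\Omega\Psi=K$. The case $n=4$, $|K|=2$ is a similarity-class case, because $\SL=\GL$ there, and it is handled by Theorem \ref{theorem-1}. So we may assume that $\Psi$, say, is noncyclic. Then its minimal polynomial has degree $r<n$, and in a suitable basis
\[
Q=\left(\begin{array}{cc} R & 0\\ 0 & S\end{array}\right)\in\Psi ,
\]
with $R\in\M(r,K)$ cyclic, $S\ne 0$, and $\mu(S)\mid\mu(R)$.

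The key point is that $\Psi$ is stable under conjugation by $X\oplus Y$ whenever $\det X\det Y=1$. Moreover, $S$ has a nontrivial centralizer, because $\mu(S)\mid\mu(R)$ and $S$ commutes with any scalar. Hence we may take $Y=(\det X)^{-1}$ acting as a scalar on one block. More precisely, choose $Y\in C_{\GL}(S)$ with $\det Y=(\det X)^{-1}$; a scalar block or a suitable polynomial in $S$ works. In the cleanest case, when $S$ has an eigenvalue in $K$, we put $S$ in triangular form and take $Y=\Idm\oplus(\det X)^{-1}$ on a $1\times 1$ block, exactly as in Lemma \ref{ARADHERZOG7}. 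Then $R^X\oplus S\in\Psi$ for every $X\in\GL(r,K)$, so the whole $\GL(r,K)$-similarity class of $R$ appears in the first block.

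Next we use Lemma \ref{ARADHERZOG8} to place $\Omega$ suitably. We choose $W\in\Omega$ of the form
\[
W=\left(\begin{array}{cc} A & B\\ C & D\end{array}\right)
\]
with $A\in\M(r',K)$ nonscalar, where $r'\le \min(r,\mr(W),n/2)$ is at least $2$. This is possible since $\Omega$ is nonscalar and $n\ge 4$. We then arrange that $R$ splits as $R_1\oplus R_2$ with $R_1$ a nonscalar cyclic block of size $r'$. Alternatively, we reorganize the blocks so that the $r'\times r'$ corner of $Q$ is a nonscalar cyclic matrix whose $\GL(r',K)$-similarity class is realized inside $\Psi$ by the determinant compensation above.

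The product then has trace
\[
\tr(A R_1^X)+c ,
\]
where $c$ is constant as $X$ ranges over $\GL(r',K)$. By Lemma \ref{ARADHERZOG2} when $r'\ge 3$, and by Lemma \ref{ARADHERZOG1}(2) when $r'=2$, the first summand takes all values in $K$. For $r'=2$ this uses that $A$ can be chosen reducible and nonprimary, for instance $A=0_1\oplus\Idm_1$ after a shift by a scalar. Shifting $A$ by $\lambda\Idm$ changes the trace only by a controlled constant, so the shift is harmless.

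The main obstacle is the determinant compensation when $S$ has no eigenvalue in $K$, or when $\mr(W)$ is too small, for example when $\Omega$ is a transvection-like class with $\mr=1$. For small $\mr$ I would first swap the roles of $\Omega$ and $\Psi$. Failing that, I would treat $\mr(\Omega)=\mr(\Psi)=1$ directly: take $W=\Idm+$ a rank-one matrix times a scalar, and compute
\[
\tr\bigl(\lambda(\Idm+u^T v)\,\mu(\Idm+w^T z)\bigr)
\]
explicitly, using that $n\ge 4$ leaves enough free coordinates to vary the trace over all of $K$.
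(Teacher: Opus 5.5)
Your proposal has a genuine gap at the step you call the key point. The claim that $R^X\oplus S\in\Psi$ for every $X\in\GL(r,K)$ fails exactly in the case that still needs proof. Since $R^X\oplus S=Q^{X\oplus\Idm_{n-r}}$, it lies in the $\SL(n,K)$-class of $Q$ if and only if $\det X\in\det C_{\GL(n,K)}(Q)$. Moreover, $\Psi$ is a full similarity class if and only if $\det C_{\GL(n,K)}(Q)=K^*$. So whenever $\Psi$ is not a similarity class (the only case Lemma \ref{ARADHERZOG3} does not already settle), only a proper subgroup of determinants is available. For instance, $Q=J_2(1)\oplus J_2(1)$ over $\mathbb{Q}$ has $\det C_{\GL(4,\mathbb{Q})}(Q)=(\mathbb{Q}^*)^2$.

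Your attempted fixes do not repair this. Scalars on the $S$-block only give determinants in $(K^*)^{n-r}$, and units of $K[S]$ need not do better. The device of Lemma \ref{ARADHERZOG7} also does not transfer. There, $Y=\Idm_1\oplus\det X\,\Idm_1$ really changes $D$ to $D^Y$; this is harmless only because both factors are block upper triangular and $D,T$ are triangular, so $\tr(D^YT)=\tr(DT)$. In your setting $W$ is a full block matrix, the trace is $\tr(AR^X)+\tr(DS^Y)$ with $D$ arbitrary, and $\tr(DS^Y)$ is not constant.

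The block sizes are also not under control. You need $2\le r'\le\mr(W)$, and you leave the small-$\mr$ case open; the rank-one computation is only announced. A cyclic $R$ such as a single Jordan block does not split off a cyclic direct summand of prescribed size $r'$. If the $r'\times r'$ corner of $Q$ is not a direct summand, conjugation creates cross terms with $B$ and $C$, and your formula $\tr(AR_1^X)+c$ fails. (Also, for $r'\ge3$ and a merely nonscalar $A$ you need Theorem \ref{theorem-1}, not Lemma \ref{ARADHERZOG2}.)

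Two observations are missing.
\begin{enumerate}
\item If an $\SL(n,K)$-class has $\mr<n/2$, then some eigenvalue, necessarily in $K$, has more than $n/2$ Jordan blocks, hence one of size one. So the $\GL$-centralizer has all determinants, the class is a similarity class, and Lemma \ref{ARADHERZOG3} handles it. Hence you may assume $\mr(\Omega),\mr(\Psi)\ge n/2$ and that $\Psi$ has no $1\times1$ direct summand. This gives a cyclic direct summand $R$ of size $r$ with $2\le r\le n/2$: take the smallest elementary divisor of a prime occurring at least twice.
\item Put the determinant freedom on the $\Omega$-side. By Lemma \ref{ARADHERZOG8}, choose $W\in\Omega$ whose $r\times r$ corner $A$ has an eigenvalue in $K$ of multiplicity one. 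Then $C_{\GL(r,K)}(A)$ contains all determinants, so $\{A^X: X\in\SL(r,K)\}$ is the whole $\GL(r,K)$-class of $A$. Conjugating $W$ by $X\oplus\Idm_{n-r}$ with $\det X=1$ stays in $\Omega$. Its product with the fixed $Q=R\oplus S$ has trace $\tr(A^XR)+\tr(DS)$, which covers $K$ by Theorem \ref{theorem-1} or Lemma \ref{ARADHERZOG1}. This is the paper's proof.
\end{enumerate}

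Equivalently, you could keep conjugating $Q$ but only by $X\in\SL(r,K)$, using $\tr(AR^X)=\tr(A^{X^{-1}}R)$. Your choice $A=0_1\oplus\Idm_1$ for $r'=2$ already has the needed property, but you never use it.
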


\begin{proof}
	By \ref{ARADHERZOG4} and \ref{ARADHERZOG5}, we may assume that $\Psi$ is noncyclic.
	If $\Omega$ or $\Psi$ is a similarity class the assertion follows from \ref{ARADHERZOG3}. 
	We proceed as in the proof of \ref{ARADHERZOG3}.
	Let  $\Psi = \Psi_1 \oplus \Psi_2$, where $\Psi_1$ is cyclic and $2 \le \dim \Psi_1 \le \frac{n}{2}$.
	Put $r:= \dim \Psi_1$. Let
	\[
	W = \left (\begin{array} {cc} A & B \\ C & D \end{array} \right ) \in \Omega,
	Q = \left (\begin{array} {cc} R & 0 \\ 0 & S \end{array} \right ) \in \Psi,
	\]
	where $A \in \M(r, K)$ and $R \in \Psi_1$. 
	Clearly, $\mr(W) \ge \frac{n}{2}$ as $\Omega$ is not a similarity class.
	By \ref{ARADHERZOG8}, we can additionally assume that $A$ has an eigenvalue in $K$ of multiplicity one. 
	By theorem \ref{theorem-1} and lemma \ref{ARADHERZOG1}, $\tr \{ A^Y R; Y \in \GL(r, K)\} = K$.
	Since the centralizer  of $A$ contains matrices of arbitrary determinant, we have  $\tr \{ A^X R; X \in \SL(r, K)\} \supseteq \tr \{ A^Y R; Y \in \GL(r, K)\} = K$.
	Now $\Omega \Psi$ contains the matrices
	\[
	\left (\begin{array} {cc} A^X R & X^{-1}BS \\ CXR & DS \end{array} \right ) 
	=\left (\begin{array} {cc} X^{-1}  & 0 \\ 0 & \Idm_{n-r} \end{array} \right )
	\left (\begin{array} {cc} A & B \\ C & D \end{array} \right )
	\left (\begin{array} {cc} X  & 0 \\ 0 & \Idm_{n-r} \end{array} \right )
		\left (\begin{array} {cc} R & 0 \\ 0 & S \end{array} \right ),
	\]
	and we are done.
\end{proof}


\ifdraft{\listoflabels}{}

\end{document}

\typeout{get arXiv to do 4 passes: Label(s) may have changed. Rerun}